\newcommand{\C}{{\mathbb C} }
\newcommand{\R}{{\mathbb R} }
\newcommand{\cA}{{\mathcal A} }
\newcommand{\cE}{{\mathcal E} }
\newcommand{\cL}{{\mathcal L} }
\newcommand{\cM}{{\mathcal M} }
\newcommand{\cO}{{\mathcal O} }
\newcommand{\cT}{{\mathcal T} }
\newcommand{\wt}{\widetilde}
\newcommand{\wh}{\widehat}
\def\ol#1{{\overline{#1}}}
\newtheorem{theorem} {Theorem} [section]
\newtheorem{definition}[theorem] {Definition}
\newtheorem{lemma}[theorem]  {Lemma}
\newtheorem{proposition}[theorem] {Proposition}
\newtheorem{corollary}[theorem] {Corollary}
\numberwithin{equation}{section}
\def\pw{vortex--moduli }
\def\ks{Ko\-dai\-ra--Spen\-cer }
\def\ka{K{\"a}h\-ler}
\def\ii{\sqrt{-1}}
\def\ddb{\sqrt{-1}\partial\overline{\partial}}
\def\C{\mathbb{C}}
\def\cinf{C^\infty}
\def\rk{{\mathrm{rk}}}
\def\tr{{\mathrm{tr}}}
\def\ch{{\mathrm{ch}}}
\def\ab{{\alpha\ol\beta}}
\def\ba{{\ol\beta\alpha}}
\def\eoe{End_{\cO_X}(E_1) \oplus End_{\cO_X}(E_2)}
\def\ho{Hom_{\cO_X}(E_2,E_1)}
\def\dbs{{\ol\partial^*}}
\def\db{{\ol\partial}}
\begin{document}

\title[Coupled vortex equations and Moduli]{Coupled vortex
equations and
Moduli: Deformation theoretic Approach and K\"ahler Geometry}

\author[I. Biswas]{Indranil Biswas}
\address{School of Mathematics, Tata Institute of Fundamental
Research, Homi Bhabha Road, Bombay 400005, India}

\email{indranil@math.tifr.res.in}

\author[G. Schumacher]{Georg Schumacher}

\address{Fachbereich Mathematik und Informatik,
Philipps-Universität Marburg, Lahnberge, Hans-Meerwein-Strasse,
D-35032 Marburg, Germany}

\email{schumac@mathematik.uni-marburg.de}

\subjclass[2000]{Primary 53C07, 14D20; Secondary 14D21}

\keywords{Coupled vortex equations, moduli space,
K\"ahler structure, curvature}

\date{}

\begin{abstract}
We investigate differential geometric aspects of moduli spaces
parametrizing solutions of coupled vortex equations
over a compact K\"ahler manifold $X$. These solutions are known
to be related to polystable triples via a Kobayashi--Hitchin type
correspondence. Using a characterization of infinitesimal
deformations in terms of the cohomology of a certain elliptic double
complex, we construct a Hermitian structure on these moduli spaces.
This Hermitian structure is proved to be Kähler. The proof involves
establishing a fiber integral formula for the Hermitian form. We
compute the curvature tensor of this Kähler form. When $X$ is a
Riemann surface, the holomorphic bisectional curvature turns out to
be semi--positive. It is shown that in the case where $X$ is a smooth
complex projective variety, the K\"ahler
form is the Chern form of a Quillen metric on a certain determinant
line bundle.
\end{abstract}

\maketitle

\section{Introduction}

A holomorphic vector bundle together with a holomorphic section of
it will be called a pair. More generally, a pair of holomorphic vector
bundles on a compact K\"ahler manifold
together with a holomorphic homomorphism between them will be
called a triple. Pairs and triples have been investigated
extensively by Bradlow and Garc\'{i}a-Prada (they introduced these
objects) and others \cite{AG, bra1, b2, g-p0, g-p, sdgw, b-c2, b-g-g}.

The notion of stability of a usual vector bundle generalizes to the
context of triples; the definition of a stable triple is recalled
in Section \ref{sec4.1}. The stable pairs are related, by a
Kobayashi--Hitchin type correspondence, to the solutions of vortex
equation, and the stable triples are known to be related to the
solutions of the coupled vortex equations. The coupled vortex
equations are recalled in Section \ref{sec4.1}, and
the Kobayashi--Hitchin correspondence between the stable triples
and the solutions of the coupled vortex equations is recalled
in Section \ref{sec4.2}.

Fix a compact Kähler manifold $X$ equipped with a Kähler form
$\omega$. Take a triple $(E_1, E_2, \phi)$ over $X$, where $E_1$ and
$E_2$ are holomorphic vector bundles over $X$ and
\[
\phi : E_2\longrightarrow E_1
\]
is a holomorphic homomorphism of vector bundles.
Associated to this triple we have a complex of ${\mathcal
O}_X$--modules
$$
C^\bullet \,:\, 0 \longrightarrow C^0 := End_{\cO_X}(E_1) \oplus
End_{\cO_X}(E_2)
\stackrel {\Delta}{\longrightarrow} C^1 := Hom_{\cO_X}(E_2,E_1)
\longrightarrow 0\, ,
$$
with
$$
\Delta(\psi_1,\psi_2)\,=\, \psi_1\circ \phi - \phi \circ
\psi_2\,=:\,[\psi,\phi]\, ,
$$
where $End_{\cO_X}(E_1) \oplus End_{\cO_X}(E_2)$ is at the $0$--th
position. It is easy to see that the space of all endomorphisms of
the triple $(E_1, E_2, \phi)$ coincides with the $0$--th
hypercohomology ${\mathbb H}^0(C^\bullet)$. The first
hypercohomology ${\mathbb H}^1(C^\bullet)$ parametrizes all
infinitesimal deformations of the triple, while the second
hypercohomology contains the obstructions to deformations.

Since the homomorphism $\phi$ is holomorphic, it intertwines the
Dolbeault resolutions of $E_1$ and $E_2$. Therefore, we obtain a
double complex of $C^\infty$ flabby sheaves which gives a resolution
of $C^\bullet$. Consequently, the cohomologies associated to this
double complex coincide with the hypercohomologies of $C^\bullet$.

Now, if the vector bundles $E_1$ and $E_2$ admit Hermitian
structures that give solutions of the coupled vortex equations for
$(E_1, E_2, \phi)$, then the Hermitian metrics on $E_1$ and $E_2$
and the K\"ahler form $\omega$ together define Hermitian structures
on all the terms of the above mentioned
double complex associated to $C^\bullet$. Consequently, we obtain
harmonic representatives of the hypercohomologies of $C^\bullet$.

This immediately induces an $L^2$ metric on the hypercohomologies of
$C^\bullet$, in particular, on the first hypercohomology. Thus the
infinitesimal deformations of the triple $(E_1, E_2, \phi)$ are
equipped with a Hermitian structure. This gives us a Hermitian
structure on the moduli space of solutions of the coupled vortex
equations. We call this Hermitian structure on the moduli space the
\textit{vortex-moduli} metric.

We prove the following theorem.

\begin{theorem}
The \pw Hermitian metric is actually a K\"ahler metric
in the orbifold sense.
\end{theorem}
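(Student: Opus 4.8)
The plan is to check that the fundamental $(1,1)$-form of the \pw metric is closed, working on a local uniformizing chart so that the conclusion is the K\"ahler property in the orbifold sense. Near a point representing a polystable triple $(E_1,E_2,\phi)$ there is a finite group $\Gamma$ of automorphisms (modulo scalars) acting on a smooth Kuranishi base $S$, carrying a $\Gamma$--equivariant universal family: holomorphic bundles $\cE_1,\cE_2$ over $\cX:=S\times X$ and a homomorphism $\Phi:\cE_2\to\cE_1$ restricting to $(E_1,E_2,\phi)$ over the central fibre. Solving the coupled vortex equations fibrewise gives $\cinf$ families of Hermitian metrics on $\cE_1,\cE_2$, and hence, over each fibre, harmonic representatives of the first hypercohomology $\mathbb{H}^1(C^\bullet)$. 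A tangent vector of $S$ is represented by a harmonic pair $(\alpha,\beta)$, with $\alpha\in A^{0,1}(C^0)$ and $\beta\in A^{0,0}(C^1)$, satisfying the cocycle relations $\db\alpha=0$, $\db\beta=[\alpha,\phi]$ and the co-closedness condition $\dbs\alpha+\Delta^*\beta=0$. The \pw form is then the fundamental form of the $L^2$ pairing
\[
\bigl\langle(\alpha,\beta),(\alpha',\beta')\bigr\rangle=\int_X\bigl(\langle\alpha,\alpha'\rangle+\langle\beta,\beta'\rangle\bigr)\,\frac{\omega^n}{n!}\,,
\]
which is manifestly positive and of type $(1,1)$; the whole content of the theorem is that it is $d$-closed.

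The heart of the argument is a fibre integral formula realising this form as the integral over the fibres of a \emph{closed} form on the total space $\cX$. Let $\Omega_1,\Omega_2$ denote the curvatures of the Chern connections of $(\cE_1,h_1),(\cE_2,h_2)$, and let $\omega$ also denote its pullback to $\cX$, a closed form constant along $S$. I would take $\eta$ to be the Bott--Chern/Chern--Weil representative, built from $\Omega_1,\Omega_2,\Phi,\Phi^*$, of the degree--two part of the Chern character of the universal complex $\bigl(\cE_2\xrightarrow{\ \Phi\ }\cE_1\bigr)$, wedged with $\omega^{n-1}/(n-1)!$; this is an $(n+1,n+1)$--form on $\cX$. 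Because Chern--Weil forms are closed and $\omega$ is closed, $\eta$ is closed, and the claim to be established is
\[
\int_{\cX/S}\eta \;=\; c\cdot\omega_{VM}
\]
for a universal nonzero constant $c$, where $\omega_{VM}$ is the \pw form and $\int_{\cX/S}$ is integration over the fibres.

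To prove this identity I would compute the base $(1,1)$--component of $\int_{\cX/S}\eta$ using harmonic horizontal lifts. The mixed (base--fibre) components of $\Omega_1,\Omega_2$ are, after a suitable gauge/horizontal choice, exactly the harmonic \ks representatives $\alpha_1,\alpha_2$ of the deformation, and the base derivative of $\Phi$ supplies the $\beta$--component; these are the objects entering the $L^2$ pairing. Expanding $\eta$ and carrying out the fibre integration, the purely fibre-direction curvature factors are contracted against $\omega^{n-1}$, and here the coupled vortex equations $\ii\Lambda\Omega_1+\Phi\Phi^*=\tau_1\,\mathrm{Id}$ and $\ii\Lambda\Omega_2-\Phi^*\Phi=\tau_2\,\mathrm{Id}$ are used to replace those contracted curvatures by the constants $\tau_1,\tau_2$ together with the $\Phi\Phi^*,\Phi^*\Phi$ terms. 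The three contributions from $\cE_1$, $\cE_2$ and $\Phi$ then combine to leave precisely $\int_X(\langle\alpha,\alpha'\rangle+\langle\beta,\beta'\rangle)\,\omega^n/n!$. Granting the formula, closedness is immediate: since $X$ is compact, fibre integration commutes with $d$, so $d\,\omega_{VM}=c^{-1}\int_{\cX/S}d\eta=0$. As the whole construction is $\Gamma$--equivariant, $\omega_{VM}$ descends to a closed, positive real $(1,1)$-form on the orbifold chart, which is the desired \ka (orbifold) metric.

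The main obstacle is the fibre integral identity itself. The only non-formal point is that the horizontal lift must be arranged so that the mixed curvature components represent the \emph{harmonic} classes, not merely $\db$-closed ones, and that the remaining $\db$-exact contributions integrate to zero over the fibre; this is exactly where integration by parts on $X$ and the Einstein-type normalisation provided by the vortex equations are needed. Matching the trace terms coming from the second Chern character against the $\Phi\Phi^*$ and $\Phi^*\Phi$ terms, so that the $\cE_1$, $\cE_2$ and $\Phi$ pieces assemble into the single positive expression above with all error terms controlled, is the real work of the proof.
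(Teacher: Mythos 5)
Your overall strategy coincides with the paper's: Proposition~\ref{pr:fibint} expresses $\omega^{VM}$ as a fiber integral of Chern--Weil forms plus a $\ddb$--exact term, and closedness follows because fiber integration over the compact fibers commutes with $d$; the harmonicity of the Kodaira--Spencer representatives $\mu_i$, which is forced by the coupled vortex equations, is what makes the $L^2$ pairing computable under the fiber integral. However, the specific integrand you propose fails. A closed representative, built from $\Omega^1,\Omega^2,\Phi,\Phi^*$, of the degree--two Chern character of the two--term complex $\cE_2\stackrel{\Phi}{\longrightarrow}\cE_1$ necessarily represents the class $\ch_2(\cE_1)-\ch_2(\cE_2)$, so the $\cE_2$ curvature enters your candidate $\eta$ with the sign opposite to that of the $\cE_1$ curvature. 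But $G^{VM}_{i\ol\jmath}$ is a sum of three manifestly positive terms, the contribution $\int_X\tr(g^\ba R^2_{i\ol\beta}R^2_{\alpha\ol\jmath})\,g\,dV$ carrying the \emph{same} sign as the $R^1$ term. Already at the level of cohomology classes on $S$ the identity $\int_{\cX/S}\eta=c\cdot\omega^{VM}$ cannot hold for any constant $c$ in general: by \eqref{eq:fibint} the class of $\frac{1}{4\pi^2}\omega^{VM}$ is $-\int_X\ch_2(\cE_1\oplus\cE_2)\wedge\frac{\omega_X^{n-1}}{(n-1)!}+\sum_\nu\frac{\tau_\nu}{2\pi}\int_X c_1(\cE_\nu)\wedge\frac{\omega_X^n}{n!}$, which is not proportional to the slant product of $(\ch_2(\cE_1)-\ch_2(\cE_2))\cup[\omega_X]^{n-1}$ (note also that your class carries no $\tau_\nu$--dependence at all).

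The correct integrand is the one of Proposition~\ref{pr:fibint}: $\frac12\sum_{\nu=1,2}\tr(\Omega^\nu\wedge\Omega^\nu)\wedge\frac{\omega_X^{n-1}}{(n-1)!}+\frac12\sum_\nu\tau_\nu\tr(\Omega^\nu)\wedge\frac{\omega_X^n}{n!}$, that is, the \emph{sum} of the second Chern character forms of $\cE_1$ and $\cE_2$ (equivalently $\ch_2$ of the direct sum) corrected by first Chern forms weighted by the vortex constants. The homomorphism $\Phi$ does not enter through a Bott--Chern transgression of the complex; it enters only (i) through the additional term $\ddb\int_X\tr(\Phi\Phi^*)\frac{\omega_X^n}{n!}$, which is globally $\ddb$--exact on $S$ and hence closed for free, and (ii) inside the fiber integral, where the vortex equations \eqref{vortex2} and \eqref{vortex2a} are used to replace $g^\ba R^\nu_\ab$ by $\mp\Phi\Phi^*$ plus the constants $\tau_\nu$, after which the three positive terms of $G^{VM}_{i\ol\jmath}$ emerge. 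With that replacement the rest of your argument --- harmonic representatives for the mixed curvature components, type preservation and $d$--commutation of fiber integration, and $\Gamma$--equivariance for the descent to the orbifold chart --- goes through as in the paper.
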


The following theorem is used in our investigation of the \pw
metric.

\begin{theorem}\label{th0}
Let $T\,=\,(E_1\, ,E_2\, ,\phi)$ be a
stable triple over a compact \ka\ manifold
$(X\, ,\omega_X)$. Let $S$ be a reduced complex space with a
base point $s_0\in S$, and let $\mathcal T\,=\,(\mathcal E_1\,
,\mathcal E_2\, , \Phi)$ be a deformation of $T$ over $(S\, ,s_0)$.
Let $h_1$ and
$h_2$ be Hermitian metrics on $E_1$ and $E_2$ respectively that
solve the coupled vortex equations for $T$. Then there exists a
neighborhood $U$ of $s_0$ such that the solutions can be extended to
the fibers $\mathcal T_s$, $s\in U$, in a $\cinf$ way.
\end{theorem}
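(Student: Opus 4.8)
The plan is to recast the coupled vortex equations of Section \ref{sec4.1} as a nonlinear elliptic system for the Hermitian metrics and to solve this system on the nearby fibers by the implicit function theorem, using the stability of $T$ to invert the linearization. First I would pass to a fixed $\cinf$ model: trivializing the $\cinf$ structure of the family, the deformation $\cT$ furnishes a $\cinf$ family of Dolbeault operators $\db_{i,s}$ on fixed $\cinf$ bundles $E_i$ over $X$, together with a $\cinf$ family of homomorphisms $\Phi_s$, all depending smoothly on $s$ and reducing at $s_0$ to the data of $T$. Choose $\cinf$ reference metrics $h_{i,s}$ with $h_{i,s_0}=h_i$; every nearby metric on $E_i$ is $h_{i,s}e^{u_i}$ for a section $u_i$ of $\mathrm{End}(E_i)$ that is self-adjoint with respect to $h_{i,s}$. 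Evaluating the left-hand sides of the two equations for the data $(\db_{1,s},\db_{2,s},\Phi_s)$ and these metrics defines a map into self-adjoint sections,
\[
F(s,u)=\Big(\ii\Lambda F_{h_{1,s}e^{u_1}}+\Phi_s\Phi_s^{*}-\tau_1\mathrm{Id}_{E_1},\ \ii\Lambda F_{h_{2,s}e^{u_2}}-\Phi_s^{*}\Phi_s-\tau_2\mathrm{Id}_{E_2}\Big),
\]
where $F$ denotes the Chern curvature and $\Phi_s^{*}$ the adjoint taken with respect to the evolving metrics; one has $F(s_0,0)=0$. The constants $\tau_1,\tau_2$ are held fixed, since they are governed by the ranks and degrees, which are locally constant in $s$.

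The decisive step is the analysis of $L:=D_uF(s_0,0)$. A computation entirely parallel to the \he case shows that $L$ is a second-order, self-adjoint, elliptic operator whose principal symbol is that of the Laplacian on each factor, the two factors coupled only through zeroth-order terms built from $\Phi$ and its adjoint. Its kernel is exactly the space of $h$-self-adjoint infinitesimal automorphisms of $T$, that is, the self-adjoint part of the $0$th hypercohomology $\mathbb{H}^0(C^\bullet)$. Because $T$ is stable it is simple, so $\mathbb{H}^0(C^\bullet)=\C\cdot(\mathrm{Id}_{E_1},\mathrm{Id}_{E_2})$; hence $\ker L=\R\cdot(\mathrm{Id}_{E_1},\mathrm{Id}_{E_2})$, and by self-adjointness the cokernel is the same line. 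This single direction is the simultaneous constant rescaling of both metrics, which leaves the equations invariant. Restricting $u$ to the $L^2$-orthogonal complement $W$ of $(\mathrm{Id}_{E_1},\mathrm{Id}_{E_2})$ and projecting the target off the same line, $L|_W$ is an isomorphism of the corresponding H\"older (or Sobolev) completions.

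With $L|_W$ invertible, the implicit function theorem produces, for all $s$ in a neighborhood $U$ of $s_0$, a unique small $u(s)\in W$ solving the projected equation, with $u(s_0)=0$. The residual scalar component of $F(s,u(s))$ then vanishes because of the integrated-trace (degree) identity satisfied by $\tau_1,\tau_2$, an identity that persists for $s$ near $s_0$ since the degrees are locally constant; therefore $F(s,u(s))=0$ exactly. Since $F$ is $\cinf$ jointly in $(s,u)$, the solution depends smoothly on $s$, and elliptic bootstrapping upgrades each $u(s)$ to a genuine $\cinf$ section in the $X$-directions; differentiating the equation in $s$ shows the dependence on $s$ is likewise $\cinf$. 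This yields the asserted $\cinf$ family of solutions of the coupled vortex equations on $\cT_s$.

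The one point requiring care is that $S$ is only a reduced complex space and may be singular, whereas the implicit function theorem is stated over a smooth parameter space. I would resolve this by embedding $(S,s_0)$ locally into a polydisc and extending the $\cinf$ families $\db_{i,s}$, $\Phi_s$, $h_{i,s}$ smoothly to the ambient polydisc, applying the theorem over the smooth ambient parameter and restricting the resulting $\cinf$ family back to $S$. The main obstacle is thus concentrated in the second step: identifying $\ker L$ with the infinitesimal automorphisms and invoking stability to reduce it to the scalars, together with the bookkeeping of the single scalar direction against the fixed constants $\tau_1,\tau_2$.
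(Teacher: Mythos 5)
Your proposal is correct and follows essentially the same route as the paper: both parametrize nearby metrics by self--adjoint endomorphisms, linearize the coupled vortex operator, identify the kernel of the resulting second--order elliptic operator with the self--adjoint endomorphisms of the stable triple --- hence with $\R\cdot(\mathrm{Id}_{E_1},\mathrm{Id}_{E_2})$ by simplicity --- and apply the implicit function theorem over a smooth ambient space of the reduced base. The only real difference is how the one--dimensional kernel/cokernel is disposed of: you project source and target off the scalar line and then kill the residual constant via the integrated--trace identity \eqref{chern_normalize}, whereas the paper appends the extra component $\int_X(\tr\psi_1+\tr\psi_2)\,g\,dV$ to the map $F$ and argues that $F$ becomes an isomorphism onto a codimension--one Banach submanifold meeting the relevant section transversally --- two standard and equivalent ways of handling the same scalar ambiguity.
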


The above theorem says that, in a holomorphic family of stable
triples, a solution of the coupled vortex equations can be extended
in a unique way to the neighboring fibers. As an application of
Theorem \ref{th0}, we get the existence of a moduli space of
solutions of the coupled vortex equations on stable triples in the
category of (not necessarily Hausdorff) complex spaces;
see Section \ref{sec..ms}.

In Theorem \ref{thm.curv.} we compute the curvature of the \pw
metric.

Theorem \ref{thm.curv.} has the following corollary (see
Corollary \ref{corollary1}):

\begin{corollary}
If $X$ is a Riemann surface, then the holomorphic bisectional
curvature of the \pw metric is semi--positive.
\end{corollary}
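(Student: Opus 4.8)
The plan is to deduce the corollary directly from the explicit curvature formula established in Theorem \ref{thm.curv.}, by specializing it to the case $\dim_\C X = 1$ and tracking the bidegrees of the harmonic representatives. Recall that a tangent vector $v\in \mathbb{H}^1(C^\bullet)$ at a point of the moduli space is represented by a harmonic element of the associated double complex of total degree one; concretely it is a pair $(\alpha_v,\eta_v)$, where $\alpha_v$ is a $(0,1)$-form with values in $End_{\cO_X}(E_1)\oplus End_{\cO_X}(E_2)$ and $\eta_v\in \Gamma(Hom_{\cO_X}(E_2,E_1))$ is a section. The holomorphic bisectional curvature of the \pw metric in the directions $v$ and $w$ is the value $R(v,\ol v,w,\ol w)$, and semi-positivity means this quantity is non-negative for every pair $v,w$.

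First I would write out $R(v,\ol v,w,\ol w)$ using the formula of Theorem \ref{thm.curv.} and group its summands according to the bidegrees of the form-components entering each term. The terms split into two kinds. The first kind consists of pointwise Hermitian pairings of the type $\langle \alpha_v,\alpha_w\rangle$, of the sections $\eta_v,\eta_w$, and of their mixed products, integrated against the volume form and acted on by the Green's operator $G$ of the relevant Laplacian; since $G$ is a non-negative self-adjoint operator on the orthogonal complement of the harmonic space, each such contribution has the shape $\int_X\langle G\xi,\xi\rangle\,\omega_X\ge 0$, or is a genuine $L^2$-norm, and is therefore non-negative. The second kind consists of the terms built from the exterior product $\alpha_v\wedge\alpha_w$ of the $(0,1)$-form components (the cup-product, or obstruction, contributions); these carry the only potentially indefinite sign.

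The decisive step is the observation that $\alpha_v\wedge\alpha_w$ is a $(0,2)$-form on $X$, and on a Riemann surface there are no nonzero $(0,2)$-forms. Hence every term of the second kind vanishes identically. Equivalently, the portion of the second hypercohomology with values in $C^0$, which governs these obstruction terms, is killed by the dimension hypothesis. What remains of $R(v,\ol v,w,\ol w)$ is precisely the sum of the manifestly non-negative terms of the first kind, and the semi-positivity of the holomorphic bisectional curvature follows.

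The main obstacle I anticipate is not the vanishing itself but the bookkeeping that precedes it: one must verify that, once the $(0,2)$-type products are removed, every surviving summand is genuinely of the non-negative form $\int_X\langle G\xi,\xi\rangle\,\omega_X$ or an honest squared norm, with no residual cross-term of uncontrolled sign. The coupling homomorphism $\phi$ produces extra contributions in the curvature formula through the maps $\Delta$ and $[\,\cdot\,,\phi]$, and one has to check carefully that these also either vanish by the bidegree count or assemble into non-negative expressions; confirming this, rather than the clean degree argument, is where the real work lies.
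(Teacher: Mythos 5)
Your route is the same as the paper's: the paper's entire proof of this corollary is the one--sentence remark preceding it, namely that for $\dim_{\C}X=1$ the first integral in Theorem \ref{thm.curv.} disappears and the two remaining Green's--operator terms are non--negative. You correctly identify the same decomposition of the curvature and the same source of positivity for the surviving terms.

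However, the step you call decisive --- that every sign--indefinite contribution is built from the product $\alpha_v\wedge\alpha_w$ of the $(0,1)$--form components and is therefore a $(0,2)$--form --- is not accurate for this complex, and this is a genuine gap rather than bookkeeping. The bracket $[\mu_i\wedge\mu_k]$ takes values in $\wt C^2=\cA^{0,2}(X,\eoe)\oplus\cA^{0,1}(X,\ho)$, and on a Riemann surface only the first summand dies. The component of $[\mu_i\wedge\mu_k]$ in $\cA^{0,1}(X,\ho)$, explicitly
\begin{equation*}
\left(R^1_{i\ol\beta}\Phi_{;k}-\Phi_{;k}R^2_{i\ol\beta}+R^1_{k\ol\beta}\Phi_{;i}-\Phi_{;i}R^2_{k\ol\beta}\right)dz^{\ol\beta},
\end{equation*}
couples the form component of one Kodaira--Spencer representative with the section component $-\Phi_{;i}$ of the other; it is of type $(0,1)$ and does not vanish in dimension one for degree reasons. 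The corresponding curvature contribution $-\int\tr\left([\mu_\xi\wedge\mu_\eta]\cdot G[\mu^*_{\ol\xi}\wedge\mu^*_{\ol\eta}]\right)g\,dV=-\langle G[\mu_\xi\wedge\mu_\eta],[\mu_\xi\wedge\mu_\eta]\rangle$ enters with the unfavourable sign, so it must be shown to vanish or be controlled, and the pure bidegree count does not accomplish this. This is exactly the issue you defer to ``where the real work lies'' in your final paragraph and never resolve; to be fair, the paper itself disposes of it with the unargued assertion that only the second term survives, so your proposal reproduces the paper's argument at the same level of detail, but as a proof it is incomplete at precisely this step. A smaller caveat: of the two surviving terms, only $\int\tr\left((\mu_\eta\cdot\mu^*_{\ol\xi})\,G(\mu_\xi\cdot\mu^*_{\ol\eta})\right)g\,dV$ is an honest $\langle GB,B\rangle$; the other is $\langle GA,C\rangle$ with $A\neq C$, so its non--negativity also requires an argument beyond ``$G\ge 0$''.
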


Under the extra assumption that $X$ is complex projective, we
construct a certain holomorphic Hermitian line bundle over the
moduli space of stable triples whose curvature coincides with the
\pw form. The line bundle in question is a determinant bundle
associated to direct images, and the Hermitian structure is given by
a construction due to Quillen and Bismut--Gillet--Soul\'e.

We now give a very brief description of the contents of the
individual sections.

In Section~\ref{se:basic}, we collect basic definitions and
notations. In Section~\ref{Sect3}, deformations of triples are
studied. In Section~\ref{Sect4} the coupled vortex equations are
investigated, and the deformation theoretic approach introduced in
Section~\ref{Sect3} is pursued further. In Section~\ref{ellcom} the
associated elliptic complex is investigated. In Section~\ref{Sect6},
the \pw metric is constructed. In Section~\ref{Sect7} a fiber
integral formula for this metric is established, and it is shown
that the \pw metric is K\"ahler. In Section~\ref{Sect8} we compute the
curvature of the \pw metric.

\section{Basic definitions}\label{se:basic}

Let $X$ be a compact, connected \ka\ manifold, of complex dimension
$n$,
equipped with a \ka\ form $\omega_X$. We will write
$$\omega_X \,= \,\ii
g_{\alpha\ol \beta} dz^\alpha \wedge dz^\ol\beta
\,=\,\ii \sum_{\alpha ,\beta =1}^n
g_{\alpha\ol \beta} dz^\alpha \wedge dz^\ol\beta
$$
with respect to
local holomorphic coordinates $(z^1,\dots,z^n)$, and we will always
use the summation convention. In the sequel, we identify locally
free coherent analytic sheaves on K\"ahler manifolds
with holomorphic vector bundles on them.

We will use the following conventions. The Kähler form $\omega_X$
gives rise to a connection on $X$, which we will, given
any complex space $S$, extend in a flat
way (in the direction
of $S$) to $X\times S$. As above, we will denote by $z^\alpha,
z^\gamma,\ldots$ holomorphic local coordinates on $X$ together with
the conjugates $z^\ol\beta, z^\ol\delta,\ldots$. We will denote by
$s^i, s^k, \ldots$ and $s^\ol\jmath, s^\ol\ell,\ldots$
respectively similar
coordinates on $S$ if $S$ is smooth. If $S$ is not smooth, then
$s^i, s^k, \ldots$ and $s^\ol\jmath, s^\ol\ell,\ldots$ will denote
coordinates on
an ambient smooth space into which a neighborhood, of $S$, of a
given base point in $S$ is minimally embedded. We use
the semi--colon
notation (;) for covariant derivatives of sections, and also of
differential forms or tensors, with values in the
holomorphic Hermitian vector
bundles with respect to connections
induced by the K\"ahler metric on $X$ and the Hermitian
connection on the holomorphic Hermitian vector bundles.
Let the Hermitian connection $\theta_E$ on any
holomorphic vector bundle $E$
over $X$ be given locally by matrix--valued
$(1,0)$--forms $\{\theta_\alpha\}_{\alpha=1}^n$ with respect to some
local trivialization of $E$. Let $\sigma$ be a locally defined
section of $E$, which is a vector--valued function with respect to
the trivialization of $E$. We use
$$
\frac{\partial \sigma}{\partial z^\alpha}\,=\,
 \partial_\alpha \sigma \,=\, \sigma_{|\alpha}\, ,
$$
and set
$$
\sigma_{;\alpha} \,=\, \nabla_\alpha \sigma \,=\, \sigma_{|\alpha} +
\theta_\alpha\circ\sigma
$$
and
$$
\sigma_{;\ol\beta}\,=\,\sigma_{|\ol\beta}\, .
$$
Hence
$$
\sigma_{;\ab} \,=\, \sigma_{;\ba} -  R_\ab\circ \sigma\, ,
$$
where $R_\ab$ denote the components of the curvature form
$\Omega_\ab\,= \,\theta_{\alpha|\ol\beta}$ of the
connection $\theta_E$. For {\it tensors} with
values in the endomorphism bundle
$End_{{\mathcal O}_X}E$, we also have the contributions
that arise from the Kähler connection on the base. For any
differentiable homomorphism of vector bundles
$$
\psi\,:\, E_2 \,\longrightarrow \,E_1\, ,
$$
where $(E_i,h_i)$ are holomorphic Hermitian vector bundles, with
curvature tensors $R^i_\ab$, we have
$$
\psi_{;\ab}\, =\, \psi_{;\ba} -  R_\ab^1\circ \psi +
\psi\circ R_\ab^2\, ,
$$
and we will write
\begin{equation}\label{conv}
\psi_{;\ab} \,=\, \psi_{;\ba} -  [R_\ab\, ,\psi]
\end{equation}
for short.

\section{Deformations}\label{Sect3}

Let $E_1$ and $E_2$ be  holomorphic vector bundles, and let
$$
\phi\,:\,E_2 \,\longrightarrow \,E_1
$$
be an ${\mathcal O}_X$--linear homomorphism. By definition, an
automorphism of the triple $$T\,=\,(E_1\, , E_2\, , \phi)$$ consists of
a pair of automorphisms $\psi_1$ and $\psi_2$, of $E_1$ and $E_2$
respectively, such that
$$
\phi \circ \psi_2 \,=\, \psi_1 \circ \phi\, .
$$

A holomorphic family of such triples over a complex
parameter space $S$ consists of a triple
$$
\cT\,=\,(\cE_1\, ,\cE_2\, ,\Phi)
$$ on $X\times S$. For any point
$s\in S$, the fiber $\cT_s$ is just the restriction of
$(\cE_1,\cE_2,\Phi)$ to $X\times \{s\}\simeq X$. Using the notion
of a holomorphic family, we can derive the notion of a deformation
of an object $T$ over a space $(S,s_0)$ with a distinguished base
point $s_0\in S$
in the usual way, which is done by fixing an isomorphism $T
\stackrel{\sim}{\longrightarrow}\cT_{s_0}$. Isomorphism classes of
deformations of such triples $(E_1,E_2,\phi)$ satisfy the
Schlessinger condition \cite{Schless},
and semi--universal deformations exist by the general theory.

Isomorphism classes of infinitesimal deformations of
$(E_1,E_2,\phi)$ over the double point $$D\,= \,\C[\varepsilon]/
\varepsilon^2 \,=\, (\C \oplus \varepsilon
\C\, , 0)$$
with $\varepsilon^2\,=\,0$ can be identified with the equivalence
classes of extensions of the homomorphism
$$\phi\,:\,E_2 \,\longrightarrow\, E_1
$$
by itself, i.e.,
the equivalence classes of diagrams of the following type:
\begin{equation}\label{ext}
\xymatrix{0 \ar[r] & \varepsilon E_2 \ar[d]^\phi\ar[r]& \cE_2
\ar[r]\ar[d]^\Phi & E_2 \ar[r] \ar[d]^\phi & 0
\\ 0 \ar[r] &\varepsilon E_1 \ar[r] & \cE_1  \ar[r] & E_1 \ar[r] & 0 &}
\end{equation}
Here $\varepsilon E_j \,\hookrightarrow \, \cE_j$,
$j\,=\,1\, ,2$, refers to the
$\cO_D$--module structure of the $\cE_j$ (cf.\ \cite{biswas-ramanan,
b-g-g}).

In order to describe infinitesimal deformations and infinitesimal
automorphisms respectively of such triples $(E_1,E_2,\phi)$, we use
the following complex of $\cO_X$--modules:
\begin{equation}\label{c0}
C^\bullet \,:\, 0 \longrightarrow C^0 :=
End_{\cO_X}(E_1) \oplus End_{\cO_X}(E_2) \stackrel
{\Delta}{\longrightarrow} C^1 := Hom_{\cO_X}(E_2,E_1)
\longrightarrow 0\, ,
\end{equation}
where
$$
\Delta(\psi_1\, ,\psi_2)\,=\, \psi_1\circ \phi - \phi \circ
\psi_2\,=:\,[\psi\, ,\phi]\, ,
$$
and $End_{\cO_X}(E_1) \oplus End_{\cO_X}(E_2)$ is in the $0$--th
position. The hypercohomology of the complex $C^\bullet$
can be computed from the short exact sequence
$$
0 \,\longrightarrow\, A^\bullet \,\longrightarrow\, C^\bullet\,
\longrightarrow\,B^\bullet \,\longrightarrow\, 0\, ,
$$
where
$$
A^\bullet\,:\, 0 \,\longrightarrow\, 0 \,\longrightarrow\,
Hom_{\cO_X}(E_2,E_1) \,\longrightarrow\, 0
$$
and
$$
B^\bullet\,:\, 0\,\longrightarrow\, End_{\cO_X}(E_1) \oplus
End_{\cO_X}(E_2)\to 0 \,\longrightarrow\, 0\, .
$$
So $A^1 \,=\, C^1$ and $B^0 \,=\, C^0$.
We have a long exact sequence of hypercohomologies
\begin{gather}
0 \,\longrightarrow\, \mathbb H^0(C^\bullet)
\,\longrightarrow\, H^0(X,\eoe)
 \,\stackrel{\Delta}{\longrightarrow}\, H^0(X,\ho) \notag \\
\quad \,\longrightarrow\,  \mathbb H^1(C^\bullet)
\,\longrightarrow\, H^1(X,\eoe)\,
\stackrel{\Delta}{\longrightarrow}\, H^1(X,\ho) \\
\,\longrightarrow\, \mathbb H^2(C^\bullet)
\,\longrightarrow\, \ldots \hspace{9cm}
\notag
\end{gather}
The hypercohomology groups $\mathbb H^0(C^\bullet)$ and $\mathbb
H^1(C^\bullet)$ parametrize respectively the endomorphisms and the
infinitesimal deformations of $(E_1,E_2,\phi)$, whereas the
obstructions of infinitesimal deformations live in $\mathbb
H^2(C^\bullet)$.

Let
$$
\cT \,\longrightarrow\, S
$$
with $\cT\,=\,(\cE_1\, ,\cE_2\, ,\Phi)$ be a holomorphic family of
triples $\cT_s$, $s\in S$, parametrized by $S$. Suppose that $h_i$,
$i\,=\, 1\, ,2$, is a Hermitian metric on $\cE_i$ such that the
restrictions
of $h_1$ and $h_2$
to $X\times \{s\}$ are solutions of the coupled vortex equations;
the coupled vortex equations are recalled in \eqref{vortex1} and
\eqref{vortex1a}.

Let $\Omega^i$ be the curvature form of the Hermitian connection for
$h_i$ on $\cE_i$, $i\,=\,1\, ,2$, over $X \times S$ with curvature
tensor
$R^i$. So the contractions
$$
\Omega^i\; \llcorner \frac{\partial}{\partial s_k}
~\,~\,~\,\text{~coincides~\,with~}~\,~\,~\,
R^i_{k\ol\beta}\, dz^\ol\beta\, .
$$

\section{Stability, coupled vortex equation and Kobayashi--Hitchin
correspondence}\label{Sect4}

\subsection{Notions}\label{sec4.1}
Let $$
T\,=\, (E_1\, ,E_2\, ,\phi)
$$
be a triple. A
sub--triple $T'$ of $T$
consists of coherent torsionfree subsheaves $E_i' \subset
E_i$, $i\,=\, 1\, ,2$, such that $\phi'\,:=\,\phi|E_2'$ maps $E_2'$ to
$E_1'$. A sub--triple is called proper if it is not equal to $T$.

For a real number $\alpha$, the $\alpha$--degree and
$\alpha$--slope of $T$ are defined as follows:
\begin{eqnarray}\label{alphadeg}
  \deg_\alpha(T)&:=& \deg(E_1)+ \deg(E_2) + \alpha \cdot\rk(E_2)\\
  \mu_\alpha(T) &:=& \frac{\deg_\alpha(T)}{\rk E_1
+ \rk E_2}.\label{alphadega}
\end{eqnarray}
The degree of a coherent analytic sheaf on $X$ is defined in terms of the
Kähler metric $\omega_X$.

The $\alpha$--degree and $\alpha$--slope of a subtriple of $T$ is
defined exactly as done in \eqref{alphadeg} and \eqref{alphadega}
respectively.

\begin{definition}\label{def-tr.s}
A triple $T \,=\,(E_1\, ,E_2\, ,\phi)$ is called
$\alpha$--stable if for any nonzero proper sub--triple $T'
\,=\, (E'_1,E'_2,\phi')$, with $ \rk E'_1 + \rk E'_2 \,<\,
\rk E_1 + \rk E_2$, the inequality
$$
\mu_\alpha(T')\,<\, \mu_\alpha(T)
$$
holds.

If the weaker inequality $\mu_\alpha(T')\,\leq\, \mu_\alpha(T)$ holds
for any non--zero proper  subtriple, then $T$ is called
$\alpha$--semistable.

An $\alpha$--semistable triple is called $\alpha$--polystable if it is
a direct sum of $\alpha$--stable triples.
\end{definition}

Let $h^i$ be Hermitian metrics on $E_i$ with curvature forms
$\Omega^i$, where $i\,=\,1\, ,2$. Assume that the $\omega_X$--volume of
$X$
is normalized to $2 \pi$. Denote by $\Lambda_X$ the operator dual to
the exterior multiplication by $\omega_X$ of differential forms with
values in vector bundles.

\begin{definition} The coupled vortex equations for $((E_1,h_1),
(E_2,h_2),\phi)$ read as
\begin{eqnarray}\label{vortex1}
  \ii \Lambda_X \Omega^1 + \phi\phi^* &=& \tau_1 \cdot{\rm Id}_{E_1}\\
  \ii \Lambda_X \Omega^2 - \phi^*\phi &=& \tau_2 \cdot
  {\rm Id}_{E_2}\label{vortex1a}
\end{eqnarray}
or equivalently
\begin{eqnarray}\label{vortex2}
  g^\ba R^1_\ab + \phi\phi^* &=& \tau_1 \cdot {\rm Id}_{E_1}\\
  g^\ba R^2_\ab - \phi^*\phi &=& \tau_2 \cdot
  {\rm Id}_{E_2}\label{vortex2a}
\end{eqnarray}
where $\tau_1$ and $\tau_2$ are some real numbers.
\end{definition}

\subsection{Kobayashi--Hitchin correspondence}\label{sec4.2}
For any Hermitian metrics $h_1$ and $h_2$ on $E_1$ and $E_2$
respectively satisfying \eqref{vortex1} and \eqref{vortex1a},
integrating traces of the equations over $X$ yields
\begin{equation}\label{chern_normalize}
  \deg E_1 + \deg E_2 \,=\, \tau_1 \rk E_1 + \tau_2 \rk E_2\, .
\end{equation}

We recall the \textit{Kobayashi--Hitchin correspondence} for triples.
It states that a triple $T \,=\, (E_1\,
,E_2\, ,\phi)$ is $\alpha$--polystable
(see Definition \ref{def-tr.s}) if and only if the following holds:
\begin{itemize}
\item $E_1$ and $E_2$ admit Hermitian metrics satisfying
\eqref{vortex1}
and \eqref{vortex1a} with $\alpha\,=\,\tau_1-\tau_2$, and
\item \eqref{chern_normalize} holds.
\end{itemize}

The Kobayashi--Hitchin correspondence is proved in \cite[p. 182,
Theorem 3.1]{AG}. We also note that in \cite{g-p}, the
Kobayashi--Hitchin correspondence was proved under the assumption
that $\rk E_2 \,=\,1$ with $X$ being an arbitrary compact K\"ahler
manifold, while in \cite{b-c2} it was proved under the assumption
that $\dim X \,=\,1$.

\subsection{Deformation theoretic approach}

All automorphisms of a stable triple
$$
T \,=\, (E_1\, ,E_2\, ,\phi)
$$
are
of the form $\lambda (\text{Id}_{E_1}\oplus \text{Id}_{E_2})$,
where $\lambda\in {\mathbb C}^*$. Therefore, any automorphism
of a stable triple
can be extended to neighboring fibers in a holomorphic family
of triples.
So semi--universal deformations are universal. In
this section, we prove the unique extendibility of solutions of the
coupled vortex equations in a holomorphic family.

\begin{theorem}\label{extendvortex}
Let $T\,=\,(E_1\, ,E_2\, ,\phi)$ be a stable triple over a
compact \ka\ manifold $(X,\omega_X)$. Let $S$
be a reduced complex space with a base
point $s_0\in S$, and let
$$
{\mathcal T}\,=\, (\mathcal E_1\, ,\mathcal E_2\, , \Phi)
$$
be a deformation of $T$ over $(S\, ,s_0)$. Let $h_1$ and $h_2$ be
Hermitian metrics on $E_1$ and $E_2$ respectively that
solve the coupled vortex equations for $T$. Then there exists a
neighborhood $U$ of $s_0$ such that the solutions can be extended
to the fibers $\mathcal T_s$, $s\in U$, in a $\cinf$ way.
\end{theorem}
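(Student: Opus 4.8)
The statement is an implicit-function-theorem / continuity-method result: a solution of the coupled vortex equations on the central fiber $T=(E_1,E_2,\phi)$ is to be extended smoothly over a neighborhood of the base point.  I would set this up as a problem of finding, for each $s$ near $s_0$, a pair of Hermitian metrics on the fibers $\mathcal E_{1,s},\mathcal E_{2,s}$ solving \eqref{vortex1}--\eqref{vortex1a}.  The natural way to parametrize nearby metrics is to write $h_{i,s}=h_i\,e^{f_i}$ (or rather $h_i\cdot\exp(f_i)$ in the endomorphism sense) where $f_i$ is a self-adjoint endomorphism of $\mathcal E_{i,s}$, and to view the left-hand sides of the vortex equations as the components of a nonlinear operator

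\begin{equation}\label{plan-operator}
\mathcal V(f_1,f_2,s)\,=\,\bigl(\ii\Lambda_X\Omega^1_s+\phi_s\phi_s^*-\tau_1\,\mathrm{Id},\;\ii\Lambda_X\Omega^2_s-\phi_s^*\phi_s-\tau_2\,\mathrm{Id}\bigr),
\end{equation}

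acting between suitable Sobolev (or H\"older) completions of the spaces of self-adjoint endomorphism-valued sections.  By hypothesis $\mathcal V(0,0,s_0)=0$.  The goal is to solve $\mathcal V(f_1,f_2,s)=0$ for $(f_1,f_2)$ as a smooth function of $s$, and the tool is the implicit function theorem in Banach spaces.

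The heart of the argument is the linearization of $\mathcal V$ in the $(f_1,f_2)$-directions at the central solution.  I would compute the Fr\'echet derivative $D_f\mathcal V$ at $(0,0,s_0)$ and identify it as a self-adjoint, second-order elliptic operator acting on the space of self-adjoint sections of $End_{\cO_X}(E_1)\oplus End_{\cO_X}(E_2)$; its leading part is a Laplace-type operator coming from $\ii\Lambda_X\db\partial$, with zeroth-order terms built from $\phi$, $\phi^*$, and the commutator structure $[\,\cdot\,,\phi]$ that already appears in the complex $C^\bullet$ of \eqref{c0}.  To invoke the implicit function theorem I must show this linearization is an isomorphism onto the appropriate complement, equivalently that its kernel and cokernel are as small as the stability of $T$ allows.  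Here I would use ellipticity to get Fredholmness, self-adjointness to equate kernel and cokernel, and then an integration-by-parts (Weitzenb\"ock / maximum-principle) argument to show that the kernel consists only of the trace directions corresponding to the scalar automorphisms $\lambda(\mathrm{Id}_{E_1}\oplus\mathrm{Id}_{E_2})$.  Stability of $T$ is exactly what forces all infinitesimal deformations of the Hermitian data to be trivial apart from these scalars, as recorded in the discussion preceding the statement.

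The main obstacle is precisely this cokernel/kernel computation, and it must be handled so that the extra $\C^*$-automorphism does not block the argument.  Two constraints interact: the trace of the equations integrates to the topological identity \eqref{chern_normalize}, and the scalar automorphisms give a genuine one-dimensional kernel direction.  The plan is to restrict $\mathcal V$ to the slice of \emph{trace-free} (or suitably normalized) self-adjoint endomorphisms, so that on this slice the linearization becomes an isomorphism; the normalization \eqref{chern_normalize}, which holds on every fiber since it is topological and the $\tau_i$ are fixed, guarantees that the trace part of the equation is automatically satisfied and need not be solved.  Once the linearization is an isomorphism on the slice, the implicit function theorem yields a unique $\cinf$ family $s\mapsto(f_1(s),f_2(s))$ on a neighborhood $U$ of $s_0$ with $\mathcal V(f_1(s),f_2(s),s)=0$, and elliptic regularity upgrades the Sobolev solution to a genuinely $\cinf$ one, smoothly in $s$.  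This produces the desired extension of the coupled-vortex solution to the neighboring fibers $\mathcal T_s$, which is what the theorem asserts.
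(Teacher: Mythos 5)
Your proposal is correct and follows essentially the same route as the paper: both parametrize the nearby metrics by self--adjoint endomorphisms, apply the implicit function theorem in Banach spaces to the coupled vortex operator, and use stability together with an integration--by--parts identity to show that the kernel of the (elliptic, self--adjoint) linearization is exactly the real line spanned by $(\mathrm{Id}_{E_1},\mathrm{Id}_{E_2})$. The only cosmetic difference is how this one--dimensional degeneracy is removed: the paper augments the operator with the extra scalar component $\int_X(\tr\psi_1+\tr\psi_2)\,g\,dV$ and concludes by transversality of the image in $V\times\mathbb R$, whereas you restrict to a trace--normalized slice and observe that the cokernel component of the equation vanishes automatically by the topological identity \eqref{chern_normalize}.
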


\begin{proof}
We will use an approach, which is slightly different from the usual
one involving the action of the complexified gauge group.

Let $(E,h)$ stand for any Hermitian holomorphic vector bundle. If
$\sigma$ and $\tau$ are sections, then we write any other Hermitian
metric $\wt h$ on $E$ in the following form:
\begin{equation}\label{wth}
\wt h(\sigma\, ,\tau)\,=\,h(\psi\sigma\, ,\tau)\, ,
\end{equation}
where $\psi\in End(E)$ is a differentiable section which is
self--adjoint with respect to $h$, that is,
$$
\psi^*\,=\,\psi\, .
$$
We rephrase \eqref{wth} in local coordinates. Let $\{e_i\}$ be a set
of local frames of $E$ and denote by
$$
\sigma\,=\,\sum_i \sigma^ie_i\, , \; \tau\,=\,\sum_i \tau^i e_i
$$
sections of $E$. Then
$$
h(\sigma,\tau)\,= \, \sum_{i,j}\sigma^i h_{i\ol\jmath}
\ol\tau^\ol\jmath\, ,
$$
$$
\psi(\sigma)\,=\,  \sum_{i,j}\sigma^i \psi_i^{\; k} e_k\, .
$$
(Observe that for compositions of morphisms, the order of the
corresponding matrix multiplications is reversed.) For the induced
connection $\theta\,=\,\theta_\alpha dz^\alpha$ we have the notation
$$
\theta\sigma\,=\,\sigma^i\theta^{\;k}_{i\alpha}e_kdz^\alpha\, ,
$$
where
$$
\theta\,=\, \partial h \cdot h^{-1}\, , ~\text{ i.e., }~
\theta^{\;k}_{i\alpha}\,=\,
h_{i\ol\jmath|\alpha}\cdot h^{\ol\jmath k}\, .
$$
Now \eqref{wth} reads as
$$
\wt h_{i\ol\jmath}\,=\, \psi_i^{\; k} h_{k\ol\jmath}\, .
$$
The induced connections are
$$
\wt \theta \,=\, \theta + (\partial \psi - [\theta,\psi])\cdot \psi^{-1}
\,=\, \theta + (\partial_\theta\psi)\cdot \psi^{-1}\, ,
$$
where $\partial_\theta$ is the covariant exterior derivative. (Here,
in this section, we need to use the matrix notation instead of the
notations of endomorphisms, which accounts for the above sign.)

If $\chi$ is any differentiable endomorphism of $E$, define $\chi^*$
by
$$
\chi^{*k}_{\;\; i}\,=\,
h_{i\ol\ell}\ol{\chi^\ell_{\;j}}h^{\ol\jmath k}\, .
$$
The following identity for the adjoint $\chi^{\wt *}$
of $\chi$ with respect to the Hermitian structure $\wt h$ holds
$$
\chi^{\wt*}\,=\,\psi^{-1}\chi^*\psi\, ,
$$
and $\chi\,=\,\chi^{\wt*}$, if and only if $\psi \chi$ is
self--adjoint with respect to $h$.

Now the curvatures are (again in terms of the holomorphic structure
on $E$)
\[
\wt \Omega\,=\, \db \wt\theta\, ,
\]
and $\Omega\,=\,\db \theta$. We
note that
$$
(\psi\wt\Omega)^*\,=\,\psi\wt\Omega\, .
$$

In order to prove Theorem~\ref{extendvortex} we extend a pair of
Hermitian metrics $(h_1,h_2)$, which solve the coupled vortex
equations for the triple $T$, as differentiable
families of Hermitian metrics $(h_{1,s}, h_{2,s})_{s\in S}$ for
$\{\mathcal T_s\}_{s\in S}$. Applying self--adjoint differentiable
automorphisms $\psi_{1,s}$ and $\psi_{2,s}$ respectively
of $\cE_{1,s}$
and $\cE_{2,s}$, we get Hermitian metrics $\wt h_{1,s}$ and $\wt
h_{2,s}$  depending differentiably upon the parameter $s$.

For the induced curvature forms
$$
\wt\Omega^j\,=\, \Omega^j + \db(\partial_{\theta_j}(\psi_j) \cdot
\psi_j^{-1})\, ;\; j\,=\,1,2
$$
hold for any fixed $s \in S$.

We consider the assignments
$$
F_s\,: \,(\psi_{1,s},\psi_{2,s})\,\longmapsto\,
\Big(\psi_{1,s}( \ii\Lambda_X \wt\Omega^1_s+ \phi_s\phi_s^{\wt*}-
\tau_1 {\rm Id}_{\cE_{1,s}})\, ,
$$
$$
\psi_{2,s}(\ii\Lambda_X \wt\Omega^2_s
-\phi_s^{\wt*}\phi_s- \tau_2 {\rm Id}_{\cE_{2,s}})\, ,
\int_X (\tr\,\psi_{1,s}+\tr\,\psi_{2,s})g\;dV \Big).
$$

(Here, we denote by $\wt*$ the adjoint with respect to $(\wt
h_{1,s}\, ,\wt h_{2,s})$.)

The first two components of $F_s(\psi_{1,s}\, ,\psi_{2,s})$ are
self--adjoint with respect to $(h_{1,s}\, ,h_{2,s})$. We specify the
domain of the $F_s$.

For sufficiently large $k$, and some $0<\alpha'<1$, we have
Banach manifolds
$$
W_s\,=\,
\{(\psi_{1,s},\psi_{2,s})\,\in\,
W^{k+2,\alpha'}(\mathrm{Aut}(\cE_{1,s}))\oplus
W^{k+2,\alpha'}(\mathrm{Aut}(\cE_{2,s}))\,\vert\,
\psi_{1,s}^*\,=\,\psi_{1,s},
\psi_{2,s}^*\,=\,\psi_{1,s}\}
$$
which give rise to a Banach manifold $W$ together with a smooth map
$$
\pi\,:\,W \,\longrightarrow\, S\, ,
$$
and we have
$$
V_s\,=\,\{(\eta_{1,s},\eta_{2,s}) \,\in\,
W^{k,\alpha'}(\mathrm{End}(\cE_{1,s}))\oplus
W^{k,\alpha'}(\mathrm{End}(\cE_{1,s} ));\, \eta_{1,s}^*\,=\,\eta_{1,s}
\, , \eta_{1,s}^*\,=\,\eta_{2,s}\}
$$
inducing a morphism $\nu: V \to S$ of Banach manifolds. (Here, we
assume for simplicity that $S$ is smooth. However, in case of a
reduced base space $S$ we get local submersions $\pi$ and $\nu$, and
all arguments can be carried over.) The above maps
$F_s$ give rise to a diagram
\begin{equation}\label{VWdiagram}
\xymatrix{W \ar[rr]^F \ar[rd]^\pi && V \times \mathbb R  \ar[ld]^\nu \\
&S \, .&}
\end{equation}

\begin{lemma}\label{le.th}
After restricting $F$ to suitable neighborhoods of $$({\rm
Id}_{E_1}\, ,{\rm Id}_{E_2})\,\in\, W$$
and of $(0,0,0)\,\in\, V\times \mathbb
R$ respectively, the map $F$ in \eqref{VWdiagram} is an
isomorphism of $\,W$ onto a Banach
submanifold $F(W) \,\subset\, V\times \mathbb R$ of codimension one.
\end{lemma}

\begin{proof}
We set $E_j\,:=\,\cE_{j,s_0}$ for $j\,=\,1\, ,2$, and $W_0\,:=
\,W_{s_0}$ as well
as $V_0\,:=\,V_{s_0}$ and otherwise drop the index $s_0$ from now on.
Let $t$ be a complex parameter and
$$
\psi(t)\,=\,(\psi_1(t),\psi_2(t))\,\in\, W_0
$$
a differentiable curve with
$$
\psi(0)\,=\,(\psi_1(0),\psi_2(0))\,=
\,({\rm Id}_{E_1},{\rm Id}_{E_2})\, .
$$
We
compute the derivative of $F_0(\psi(t))$.

We have in matrix notation
$$
\phi^{*}\,=\, h_1\ol\phi^th_2^{-1}\, ,~\; \phi^{\wt*}\,=\,
{\wt h}_1 \ol\phi^t{\wt h}_2^{-1}\,=\,\psi_1\phi^*\psi_2^{-1}.
$$
We return to the endomorphism notation, and get
$$
\left.\frac{d}{dt}\phi^{\wt *}\right|_{t=0} \,= \, \phi^*\dot{\psi_1}
-\dot{\psi_2}\phi^*,
$$
where the dot stands for the $t$--derivative at $t\,=\,0$. Furthermore
$$
\left.\frac{d}{dt}\ii\Lambda\wt\Omega^j\right|_{t=0}\,=\,
\left.\frac{d}{dt}\ii\Lambda(\Omega^j +
\db(\partial_{\theta_j}(\psi_j) \cdot\psi_j^{-1}))
\right|_{t=0}\,=\,
\partial_{\theta_j}^*\partial_{\theta_j}\dot{\psi_j}
$$
since $\partial_{\theta_j}(\text{Id}_{E_j})\,=\,0$. Altogether
the derivative $$D F_0\,:\,T_{\text{Id}} W_0
\,\longrightarrow\, T_0 V_0 \oplus \mathbb R$$ is
given by
\begin{gather}\label{tangent}
DF_0(\chi_1,\chi_2)\,= \hspace{11.5cm}  \\
\left(\partial_{\theta_1}^*\partial_{\theta_1}\chi_1 +
\phi(\phi^*\chi_1 - \chi_2\phi^*)\, ,
\partial_{\theta_2}^*\partial_{\theta_2}\chi_2 -
(\phi^*\chi_1 - \chi_2\phi^*)\phi\, , \int_X (\tr \chi_{1,s}+\tr
\chi_{2,s})g\;dV \right).\nonumber
\end{gather}
We will see that this map is an injection onto $V^0_0+ \R$, where
$V^0_0 \,\subset\, T_0V_0$ denotes the sum of spaces of trace free
endomorphisms.

Suppose that $(\chi_1,\chi_2)$ is in the kernel of $DF_0$. Then
\begin{eqnarray*}
\|\partial_{\theta_1}\chi_1 \|^2 +  \langle \phi^*\chi_1
-\chi_2\phi^*,\phi^*\chi_1  \rangle &=&0 \\
\|\partial_{\theta_2}\chi_2 \|^2 - \langle
\phi^*\chi_1-\chi_2\phi^*, \chi_2\phi^* \rangle &=&0
\end{eqnarray*}
which implies that
$$
\|\partial_{\theta_1}\chi_1 \|^2 + \|\partial_{\theta_2}\chi_2 \|^2
+\|\phi^*\chi_1 - \chi_2 \phi^* \|^2 \,=\,0\, .
$$
In particular the endomorphisms
$\chi_j$ are parallel, and self--adjoint, hence
holomorphic. Furthermore $\chi_1\phi\,=\,\phi\chi_2$. So the pair
$$\chi\,=\, (\chi_1\, ,\chi_2)$$ defines a holomorphic endomorphism of
the
given stable triple. Hence $\chi$ is a real multiple of the
identity. Since the third component in \eqref{tangent} equals zero,
we conclude that $\chi$ must be zero. By Hodge theory the rest follows.
\end{proof}

We return to the proof of Theorem \ref{extendvortex}. Denote by
$\{0\}\times S$ the zero section of
$$
\pi\,:\,V\,\longrightarrow\, S\, .
$$
We know that
$F(W_{s_0})$ intersects $\mathbb R\times \{s_0\}\times\{0\}$
transversally at the origin; here $\mathbb R$ is considered as a
subset of $V_0$. By the above Lemma~\ref{le.th}, the image $F(W)$
intersects  $\mathbb R\times \{s_0\}\times\{0\}$ transversally in a
differentiable section of
$$
\nu\,:\, V \,\longrightarrow\, S\, ,
$$
whose pull--back under $F$
is the desired solution.
\end{proof}
The deformation theoretic approach for the usual moduli space of
irreducible Hermite--Einstein connections can be found in
\cite{f-s1}.

\subsection{Moduli spaces}\label{sec..ms}
As a consequence of Theorem~\ref{extendvortex}, the moduli space of
$\alpha$--stable triples exists in the {\it category of reduced,
complex spaces, which are not necessary Hausdorff}. It should be
emphasized that the approach of \cite{AG} actually
gives a construction of the
moduli space of $\alpha$--stable triple as well as that of
irreducible solutions of the coupled vortex equations; we recall
that in \cite{AG} the
dimension reduction techniques are employed. More
precisely, from \cite{AG} it follows that a moduli space of
$\alpha$--stable triples on a compact K\"ahler manifold $X$ is
realized as an analytic subspace of a moduli space of stable vector
bundles on $X\times{\mathbb P}_1$. Similarly, a moduli space of
solutions of the coupled vortex equations on $X$ is realized as an
analytic subspace of a moduli space irreducible solutions of the
Hermite--Einstein equation on $X\times {\mathbb P}_1$.

\begin{theorem}\label{existmod}
Given a compact \ka\ manifold $X$, the moduli space of
objects of the form $((E_1\, ,h_1)\, , (E_2\, ,h_2)\, ,\phi)$,
where $(E_1\, , E_2\, ,\phi)$ is a stable triple over
$X$, and $h_i$, $i\,=\, 1\, ,2$, are Hermitian structures
on $E_i$ satisfying the coupled vortex equations, exists.
\end{theorem}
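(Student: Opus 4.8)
The plan is to build the moduli space as an analytic quotient of a local deformation parameter space, using Theorem~\ref{extendvortex} as the essential gluing tool. First I would fix a stable triple $T_0 = (E_1,E_2,\phi)$ together with a solution $(h_1,h_2)$ of the coupled vortex equations, and invoke the existence of a semi-universal deformation $\mathcal{T} \to (S,s_0)$ of $T_0$, which is guaranteed by the Schlessinger condition cited in Section~\ref{Sect3}. Since $T_0$ is stable, its only automorphisms are the scalars $\lambda(\mathrm{Id}_{E_1}\oplus\mathrm{Id}_{E_2})$; as noted just before Theorem~\ref{extendvortex}, this rigidity of automorphisms forces the semi-universal deformation to be \emph{universal}. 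This is the key structural input: universality means that any two local deformations agree on overlaps up to a canonical isomorphism, so the local charts will glue without the ambiguity that a nontrivial automorphism group would introduce.

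The second step is to attach the metric data to this parameter space. By Theorem~\ref{extendvortex}, after shrinking $S$ to a neighborhood $U$ of $s_0$, the vortex solution $(h_1,h_2)$ extends uniquely in a $C^\infty$ way to solutions $(h_{1,s},h_{2,s})$ on every fiber $\mathcal{T}_s$, $s\in U$. Uniqueness here is crucial and follows from Lemma~\ref{le.th}, which shows that the relevant map $F$ is a local isomorphism onto a codimension-one Banach submanifold, so the extended solution is singled out as a genuine section rather than one choice among many. Thus each point $s\in U$ carries a canonically determined object $((\mathcal{E}_{1,s},h_{1,s}),(\mathcal{E}_{2,s},h_{2,s}),\Phi_s)$ of the desired type, and the assignment $s\mapsto$ this object is a bijection between $U$ and the isomorphism classes of nearby objects. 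This gives a local chart for the moduli functor around $[T_0]$.

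The third step is to glue these charts. Given two stable triples $T_0, T_0'$ that are isomorphic as holomorphic triples, their semi-universal deformation spaces $(S,s_0)$ and $(S',s_0')$ are related by a canonical analytic isomorphism on a neighborhood (again using universality, so that no choices are involved). Because the vortex solution extends \emph{uniquely} by Theorem~\ref{extendvortex}, the extended metric data is automatically carried along by this isomorphism: the chart built from $(h_1,h_2)$ and the chart built from $(h_1',h_2')$ restrict to the same family on the overlap. Hence the transition maps are analytic isomorphisms, and the charts patch together into a complex space. Following the remark in Section~\ref{sec..ms}, I would not insist on the Hausdorff property: jumps in the complex structure of the underlying triple across a family can produce non-separated points, so the moduli space is constructed in the category of (not necessarily Hausdorff) reduced complex spaces, exactly as stated there.

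The main obstacle I expect is the gluing/separatedness bookkeeping rather than the local theory, which Theorem~\ref{extendvortex} already settles. Specifically, one must check that the ``solution extends uniquely'' statement is compatible with the identifications coming from universality over \emph{triple intersections} of charts, i.e.\ that the extended families satisfy a cocycle condition. This is where the uniqueness clause in Theorem~\ref{extendvortex} does the real work: since there is at most one $C^\infty$ extension of a given vortex solution along a family, the two extensions obtained via two different chains of transition maps must coincide, so the cocycle condition is automatic. The only genuinely delicate point is handling the non-reduced or singular structure of $S$; here I would appeal to the parenthetical remark in the proof of Theorem~\ref{extendvortex}, where $\pi$ and $\nu$ become local submersions over a reduced $S$ and all arguments carry over, so that the charts are honest reduced complex spaces and the quotient by the residual $\mathbb{C}^*$-scaling of the metrics (which does not change the isomorphism class of the object) is taken in the appropriate sense.
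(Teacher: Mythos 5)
Your proposal is correct and follows essentially the same route as the paper: the paper derives Theorem~\ref{existmod} precisely from Theorem~\ref{extendvortex} together with the observation, made just before it, that stability forces automorphisms to be scalars so that semi-universal deformations are universal, and it places the resulting space in the category of reduced, not necessarily Hausdorff complex spaces. The paper leaves the gluing bookkeeping implicit (and also notes an alternative construction via the dimension-reduction embedding into moduli of bundles on $X\times\mathbb{P}_1$ from [A-G]), but the argument you spell out is exactly the one being invoked.
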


\section{Elliptic complex}\label{ellcom}

Take any triple $(E_1, E_2,\phi)$. As before, let $C^\bullet$ be the
complex associated to it (see \eqref{c0}).

In order to use the theory of elliptic operators, we observe that
the Dolbeault complexes provide a resolution $ C^{\bullet\bullet}$
of $C^\bullet$:
\begin{equation}\label{dolbres}
\begin{xymatrix}
  {0 \ar[r] & \eoe \ar[r]\ar[d]^\Delta &
  \cA^{0,0}(\eoe) \ar[r]\ar[d]^\Delta & \ldots \\
  0 \ar[r] & \ho \ar[r] &  \cA^{0,0}(\ho) \ar[r] & \ldots  &
  }
\end{xymatrix}
\end{equation}
$$
\begin{xymatrix}
  {\cA^{0,i}(\eoe) \ar[r]\ar[d]^\Delta &
  \cA^{0,i+1}(\eoe) \ar[r]\ar[d]^\Delta & \ldots \\
  \cA^{0,i}(\ho) \ar[r] &  \cA^{0,i+1}(\ho) \ar[r] & \ldots  &
  }
\end{xymatrix}
$$
where $C^{0,i}\,=\,\cA^{0,i}(\eoe)$ and $C^{1,i}\,=\,
\cA^{0,i}(\ho)$.

It follows immediately that $\Delta\circ \db\, =\, \db \circ
\Delta$.

Let ${\wt C}^\bullet$ be the complex which is constructed in the
following way. Define
$$
{\wt C}^i\, :=\, C^{0,i} \oplus C^{1,i-1}
$$
with the convention that $C^{1,-1}\, =\, 0$. The
homomorphisms
$$
(\overline{\partial}\, ,(-1)^i\Delta)\, :\, C^{0,i}\,
\longrightarrow\,C^{0,i+1} \oplus C^{1,i}
$$
and
$$
(0\, ,\overline{\partial})\, :\,  C^{1,i-1} \, \longrightarrow\,
C^{0,i+1} \oplus C^{1,i}
$$
together give the homomorphisms ${\wt C}^i\, \longrightarrow\,
{\wt C}^{i+1}$ that define the complex ${\wt C}^\bullet$.

We note that
the cohomology of $C^\bullet$ can be identified with the
cohomology of the single complex $\wt C^\bullet$ associated to
$C^{\bullet\bullet}$.

Following the construction in \cite{s-t} one can see that the global tensors
$\Omega^i$ and $\Phi$ over $X \times S$ already describe the infinitesimal
deformations. (This is in fact true for any pair of Hermitian metrics.) In
other words, we have the following lemma:
\begin{lemma}\label{le:kod-sp}
 Let
  $$
  \rho_{s_0}\,:\, T_{s_0}S \,\longrightarrow\, \mathbb H^1(\wt
C^{\bullet\bullet})
  $$
  be the Kodaira--Spencer mapping. Then
  \begin{gather*}
 \mu_i\,=\,\left(-\Phi_{;i}, (R^1_{i\ol\beta}dz^\ol\beta,
R^2_{i\ol\beta}dz^\ol\beta) \right)
  \hspace{8cm}\\
\in\, \cA^{0,0}(X,\ho)\oplus \cA^{0,1}(X,\eoe)
 \end{gather*}
represents the class
  $$
  \rho_{s_0}\left(\left.\frac{\partial}{\partial
  s_i}\right|_{s=s_0}\right)\in \mathbb H^1(\wt C^{\bullet\bullet})\, .
  $$
 \end{lemma}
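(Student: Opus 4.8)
The plan is to read off the Kodaira--Spencer class directly from the Chern connections of the Hermitian metrics $h_1,h_2$ on $\cE_1,\cE_2$ over $X\times S$, following the single-bundle computation of \cite{s-t} and adapting it to the homomorphism $\Phi$. First I would fix the $\cinf$ trivialization of the family along the base given by parallel transport in the $S$--directions with respect to the Chern connections; in this trivialization the holomorphic structures $\db_s$ on the fibres $\cE_{j,s}$ and the homomorphisms $\phi_s$ become $s$--dependent data, and the Kodaira--Spencer representative of $\partial/\partial s_i$ is obtained by differentiating these data at $s_0$. For each bundle separately the argument of \cite{s-t} identifies the $s_i$--derivative of the $(0,1)$--connection form with the mixed curvature: since $[\,\nabla_i,\nabla_{\ol\beta}\,]=R^j_{i\ol\beta}$ and $\nabla^{0,1}$ is the fibre $\db$, the variation of $\db_s$ equals $R^j_{i\ol\beta}\,dz^\ol\beta$. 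This yields the $\cA^{0,1}(\eoe)$--component $a=(R^1_{i\ol\beta}dz^\ol\beta,R^2_{i\ol\beta}dz^\ol\beta)$ of $\mu_i$.

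Next I would account for the homomorphism. In the same trivialization $\Phi$ restricts to an $s$--dependent section $\phi_s$ of $\ho$, whose variation is measured by the covariant derivative $\Phi_{;i}=\nabla_i\Phi\in\cA^{0,0}(\ho)$, and this is the $C^{1,0}$--entry of the total complex $\wt C^\bullet$. Assembling the two pieces gives the candidate cochain $\mu_i=(-\Phi_{;i},a)\in C^{1,0}\oplus C^{0,1}=\wt C^1$, and it then remains to verify that $\mu_i$ is a cocycle for the total differential and that it represents the intrinsic class $\rho_{s_0}(\partial/\partial s_i)$.

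For the cocycle condition the total differential on $\wt C^1$ requires $\db a=0$ and $\db b=\Delta a$ with $b=-\Phi_{;i}$. The first equation is the second Bianchi identity: the symmetry $R^j_{i\ol\beta;\ol\gamma}=R^j_{i\ol\gamma;\ol\beta}$ kills the antisymmetric combination $R^j_{i\ol\beta;\ol\gamma}\,dz^\ol\gamma\wedge dz^\ol\beta$. For the second, the commutation rule \eqref{conv} applied with one base index gives $\Phi_{;i\ol\beta}=\Phi_{;\ol\beta i}-[R_{i\ol\beta},\Phi]$; holomorphicity of $\Phi$ over $X\times S$ forces $\Phi_{;\ol\beta}=0$, hence $\Phi_{;\ol\beta i}=0$, so that $\Phi_{;i\ol\beta}=-[R_{i\ol\beta},\Phi]=-\bigl(R^1_{i\ol\beta}\phi-\phi R^2_{i\ol\beta}\bigr)$. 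Therefore $\db(-\Phi_{;i})=-\Phi_{;i\ol\beta}\,dz^\ol\beta=(R^1_{i\ol\beta}\phi-\phi R^2_{i\ol\beta})\,dz^\ol\beta=\Delta a$, which is precisely the required identity. In particular the sign of the $\Phi$--entry is exactly the one dictated by this computation.

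Finally, to see that $\mu_i$ represents the Kodaira--Spencer class and not merely some class in $\mathbb H^1(\wt C^{\bullet\bullet})$, I would match it with the extension-theoretic description of infinitesimal deformations recalled around \eqref{ext}: pulling the family back along $\partial/\partial s_i$ to the double point recovers, through the chosen trivialization, the first-order variations of the two holomorphic structures together with the variation of $\Phi$, and these are exactly the data encoded by $a$ and $-\Phi_{;i}$. I expect the main obstacle to be this last identification, namely matching the explicit analytic representative with the intrinsic connecting maps of the hypercohomology long exact sequence with the correct signs, rather than the Bianchi and Ricci computations, which are routine. The single-bundle case of this matching is supplied by \cite{s-t}, so the only genuinely new point is the compatibility term for $\Phi$, which the cocycle identity above provides.
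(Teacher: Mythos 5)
Your proposal is correct and follows exactly the route the paper itself indicates: the paper gives no detailed argument for Lemma \ref{le:kod-sp}, deferring the bundle part to \cite{s-t}, and your computation supplies precisely the missing new ingredient, namely that holomorphicity of $\Phi$ on $X\times S$ together with the commutation rule \eqref{conv} gives $\Phi_{;i\ol\beta}=-[R_{i\ol\beta},\Phi]$, which is the compatibility (cocycle) condition in the total complex and also forces the sign $-\Phi_{;i}$. Your relabelling of the components $a,b$ relative to \eqref{eq:dop} is purely cosmetic, and the identities you verify agree with those the paper uses later (e.g.\ in the proof that $d\mu_i=0$).
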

Given Hermitian metrics $h^i$ on $E_i$, $i\,=\,1\, ,2$, together
with $\omega_X$, the single complex
associated with \eqref{dolbres} is {\it elliptic}.

We consider the complex
$$
\Gamma(\wt C^\bullet): 0\longrightarrow C^{0,0}(X)
\stackrel{d^0}{\longrightarrow} C^{1,0}(X)\oplus C^{0,1}(X)
\stackrel{d^1}{\longrightarrow} C^{1,1}(X) \oplus C^{0,2}(X)
\longrightarrow\ldots
$$
More precisely,
\begin{gather*}
\Gamma(\wt C^\bullet): 0\,\longrightarrow\, \cA^{0,0}(X,\eoe)
\stackrel{d^0}{\longrightarrow}\hspace{7cm}  \\
\cA^{0,0}(X,\ho)\oplus \cA^{0,1}(X,\eoe) \stackrel{d^1}
{\longrightarrow}\hspace{2cm} \\ \cA^{0,1}(X,\ho) \oplus
\cA^{0,2}(X,\eoe ) \stackrel{d^2}{\longrightarrow} \ldots\\
\cA^{0,i-1}(X,\ho) \oplus
\cA^{0,i}(X,\eoe ) \stackrel{d^i}{\longrightarrow} \ldots\\
\cA^{0,n-1}(X,\ho) \oplus
\cA^{0,n}(X,\eoe ) \stackrel{d^n}{\longrightarrow}\\
\cA^{0,n}(X,\ho){\longrightarrow}\,0
\end{gather*}
with
\begin{eqnarray}\label{eq:dop}
d^0(f)&=&(\Delta f,\db^0 f)\; ; \qquad f\,=\,(f_1,f_2)  \\
d^1(a,b)&=&(\db^1 a - \Delta b, \db^1 b) \;  ;
\qquad b\,=\,(b_1,b_2)\, .
\end{eqnarray}

The following lemma is evident.

\begin{lemma}\label{le:adjt}
  The adjoint operators $$d^{1*}\,:\, C^1(X) \,\longrightarrow\,
C^0(X)$$ and $d^{2*}\,:\,
  C^2(X)\,\longrightarrow\, C^1(X)$ are given by
  \begin{eqnarray}
  d^{0*}(a,b)&=& (a\phi^*+ \db^*b_1, -\phi^*a + \db^*b_2)\\
  d^{1*}(u,v)&=& (\db^*u , (-u\phi^*+\db^*v_1,\phi^*u+\db^*v_2))
  \end{eqnarray}
\end{lemma}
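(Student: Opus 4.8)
The plan is to verify both displayed formulas by a direct $L^2$ computation, exploiting the fact that $d^0$ and $d^1$ are each the sum of a $C^\infty(X)$--linear algebraic operator built from $\phi$ and a Dolbeault operator, so that their adjoints split accordingly. First I would fix the inner products: on each term of $\Gamma(\wt C^\bullet)$ the $L^2$ product is the orthogonal direct sum of the standard pointwise Hermitian products on $\cA^{0,q}(X,\eoe)$ and $\cA^{0,q}(X,\ho)$---induced by $h_1$, $h_2$ and $\omega_X$---integrated against the K\"ahler volume form. With respect to these, the formal adjoint of $\db$ on each bundle is $\db^*$ by definition, the compactness of $X$ guaranteeing that integration by parts produces no boundary terms.

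The core of the argument is the pointwise adjoint of the algebraic map $\Delta(f_1,f_2)=f_1\circ\phi-\phi\circ f_2$. This is pure linear algebra in each fiber: writing $\langle A,B\rangle=\tr(AB^*)$ and using cyclicity of the trace together with $(\phi a^*)^*=a\phi^*$ and $(a^*\phi)^*=\phi^* a$, one obtains $\langle f_1\phi,a\rangle=\langle f_1,a\phi^*\rangle$ and $\langle\phi f_2,a\rangle=\langle f_2,\phi^* a\rangle$, hence
\[
\Delta^*(a)\,=\,(a\phi^*\, ,\,-\phi^* a)\, .
\]
Here I must be careful about two things: the sign coming from the minus in $\Delta$, which lands in the second slot, and the order of the compositions, since (as noted in Section~\ref{se:basic}) composition of morphisms reverses the order of matrix multiplication. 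This $\Delta^*$ is the same algebraic operator whether $a$ is a $(0,0)$--form or a $(0,1)$--form, so it feeds into both adjoint formulas unchanged.

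Finally I would assemble the two formulas. For $d^0(f)=(\Delta f,\db f)$ and any $(a,b)=(a,(b_1,b_2))$, pairing and moving each summand onto $f$ gives $d^{0*}(a,b)=\Delta^* a+\db^* b=(a\phi^*+\db^* b_1,\,-\phi^* a+\db^* b_2)$. For $d^1(a,b)=(\db a-\Delta b,\db b)$ and any $(u,v)=(u,(v_1,v_2))$, the same bookkeeping yields the $a$--component $\db^* u$ and the $b$--components $-\Delta^* u+\db^* v=(-u\phi^*+\db^* v_1,\,\phi^* u+\db^* v_2)$, which is the asserted expression. The only real obstacle is the linear--algebra step for $\Delta^*$, namely keeping the composition order and the sign consistent with the paper's conventions; once that is pinned down, everything else is the routine splitting of an adjoint over an orthogonal direct sum, which is why the authors call the lemma evident.
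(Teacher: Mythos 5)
Your computation is correct and is precisely the routine verification the authors omit by declaring the lemma ``evident'': the adjoint splits over the orthogonal direct sum into $\db^*$ plus the fiberwise adjoint $\Delta^*(a)=(a\phi^*,-\phi^*a)$, with the sign in $d^{1*}$ coming from the $-\Delta b$ term in $d^1$. No gaps; this matches the paper's (implicit) argument.
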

We return to the situation of Lemma~\ref{le:kod-sp}.
\begin{proposition}
The forms $\mu_i$ are the harmonic representatives of the \ks
classes $\rho(\partial/\partial s|_{s_0})$:

\noindent\parbox{\textwidth}{
\begin{eqnarray}
d\mu_i&=&0 \label{dmu} \\ d^*\mu_i&=&0 \label{dstarmu}
\end{eqnarray}
}
\end{proposition}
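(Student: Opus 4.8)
The plan is to verify the two conditions \eqref{dmu} and \eqref{dstarmu} directly at the base point $s_0$, where $\Phi$ restricts to $\phi$ and every tensor lives on the fiber $X\times\{s_0\}\simeq X$. The three ingredients I would use are the holomorphicity of $\Phi$ over $X\times S$, the two halves of the second Bianchi identity for the Chern connection, and the coupled vortex equations \eqref{vortex2}--\eqref{vortex2a} differentiated along the base direction.

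For the closedness \eqref{dmu}, I would compute $d^1\mu_i=(\db^1 a-\Delta b,\,\db^1 b)$ with $a=-\Phi_{;i}$ and $b=(R^1_{i\ol\beta}dz^\ol\beta,R^2_{i\ol\beta}dz^\ol\beta)$. Since $\Phi$ is holomorphic we have $\Phi_{;\ol\beta}=0$, so commuting covariant derivatives through the curvature via \eqref{conv} gives $\Phi_{;i\ol\beta}=-[R_{i\ol\beta},\phi]=-(R^1_{i\ol\beta}\circ\phi-\phi\circ R^2_{i\ol\beta})$. As $\Delta b=(R^1_{i\ol\beta}\circ\phi-\phi\circ R^2_{i\ol\beta})dz^\ol\beta$, the first component $\db^1 a-\Delta b=-\Phi_{;i\ol\beta}dz^\ol\beta-\Delta b$ cancels term by term. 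For the second component $\db^1 b$, the coefficient $R^j_{i\ol\beta;\ol\gamma}$ of the resulting $(0,2)$-form is symmetric in $\ol\beta,\ol\gamma$ by the $(1,2)$-part of the Bianchi identity, so it is annihilated by the antisymmetric $dz^\ol\gamma\wedge dz^\ol\beta$. Hence $d\mu_i=0$.

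For the coclosedness \eqref{dstarmu}, the codifferential on $\wt C^1$ is $d^{0*}$, which by Lemma~\ref{le:adjt} gives
\[
d^{0*}\mu_i\,=\,\left(-\Phi_{;i}\phi^*+\db^*(R^1_{i\ol\beta}dz^\ol\beta),\; \phi^*\Phi_{;i}+\db^*(R^2_{i\ol\beta}dz^\ol\beta)\right).
\]
The crucial computation is $\db^*(R^j_{i\ol\beta}dz^\ol\beta)=-g^\ba R^j_{i\ol\beta;\alpha}$, to which I would apply the $(2,1)$-part of the Bianchi identity $R^j_{i\ol\beta;\alpha}=R^j_{\alpha\ol\beta;i}$, rewriting it as $-(g^\ba R^j_\ab)_{;i}$, the covariant $s^i$-derivative of the fiberwise contraction appearing in the vortex equations. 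Differentiating \eqref{vortex2} and \eqref{vortex2a} in the $s^i$-direction, and using that $\tau_1,\tau_2$ and the identity endomorphisms are parallel, yields $(g^\ba R^1_\ab)_{;i}=-(\phi\phi^*)_{;i}$ and $(g^\ba R^2_\ab)_{;i}=(\phi^*\phi)_{;i}$. Finally, holomorphicity of $\phi$ gives $(\phi^*)_{;i}=(\phi_{;\ol\imath})^*=0$, whence $(\phi\phi^*)_{;i}=\Phi_{;i}\phi^*$ and $(\phi^*\phi)_{;i}=\phi^*\Phi_{;i}$. Substituting, both components of $d^{0*}\mu_i$ cancel, so $d^*\mu_i=0$.

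The main obstacle is the index bookkeeping: correctly deploying the two halves of the second Bianchi identity on the mixed component $R^j_{i\ol\beta}$—one half to kill the $(0,2)$-part in $d\mu_i$, the other to trade the fiber derivative $R^j_{i\ol\beta;\alpha}$ for the base derivative $R^j_{\alpha\ol\beta;i}$ in $d^*\mu_i$—and tracking $\phi^*$ carefully so the differentiated vortex equations line up with the holomorphic term $\Phi_{;i}$ exactly. The conceptual point is that these cancellations are simply the infinitesimal statement that the vortex equations hold identically on every fiber of the family.
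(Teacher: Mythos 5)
Your proof is correct and follows essentially the same route as the paper: the key step, $d^*\mu_i=0$, is obtained exactly as in the text by combining Lemma~\ref{le:adjt} with the Bianchi-type identity $g^\ba R^\nu_{i\ol\beta;\alpha}=(g^\ba R^\nu_\ab)_{;i}$, the differentiated vortex equations \eqref{vortex2}--\eqref{vortex2a}, and $(\Phi^*)_{;i}=0$. The only difference is that you verify $d\mu_i=0$ by direct computation, whereas the paper simply invokes that $\mu_i$ is a cocycle representing the Kodaira--Spencer class (Lemma~\ref{le:kod-sp}); your explicit check is consistent with that and adds nothing problematic.
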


\begin{proof}
We know that $d\mu_i\,=\,0$. We refer to Lemma~\ref{le:adjt} for
$d^*\mu_i\,=\,0$ and use the coupled vortex equations \eqref{vortex2}
and \eqref{vortex2a}: The first component equals
$$
(d^*\mu_i)_1\,=\, -\Phi_{;i}\cdot\Phi^*+\dbs
(R^1_{i\ol\beta}dz^{\ol\beta})\,=\,- \Phi_{;i}\cdot \Phi^*- g^\ba
R^1_{i\ol \beta;\alpha}\,=\,-\Phi_{;i}\Phi^*+(\Phi\Phi^*)_{;i}
\,=\, 0\, ,
$$
and the second follows in the same way.
\end{proof}

\section{Hermitian structure on the moduli space}\label{Sect6}

In this section we will construct a \pw Hermitian metric on the moduli
space of $\alpha$--stable triples. The corresponding inner product
is given by a natural Hermitian metric on the base of a universal
deformation. Then it will be shown that the \pw Hermitian metric is
actually a Kähler metric. The elliptic complex from
Section~\ref{ellcom} plays a key role in the construction.

Take an effective holomorphic family of holomorphic triples
such that each holomorphic triple in the family is
equipped with Hermitian structures satisfying
the coupled vortex equations. For such that family we will
construct a Hermitian structure on the parameter space
in a functorial way. This construction of the
Hermitian structure on a parameter space would give a
Hermitian structure on the moduli space
of $\alpha$--stable triples.

Let $(s_i)$ be holomorphic coordinates on $S$ around a point $s_0$
in the sense of Section~\ref{se:basic}.

Now, we are in a position to introduce a \pw metric on the parameter
space $S$ for a family of stable triples. The \pw metric is an inner
product $G^{VM}$ on the tangent spaces $T_sS$ of the bases of
holomorphic families, which is positive definite for effective
families, and it is defined in terms of the tensors $\mu_i$
representing the \ks\ classes. (The superscript ``VM'' stands
for ``vortex--moduli metric''.)
This is possible, also in the case
where $S$ is singular because the family of holomorphic
homomorphisms and the curvature forms for the connection on vector
bundles still exist on the first order infinitesimal neighborhood.
The latter fact follows from the approach described above.

\begin{definition}\label{de:pwherm}
{\rm A
Hermitian structure on the tangent space $T_{s_0}S$ is given by}
\begin{gather}
G^{VM}\left(\left. \frac{\partial}{\partial s^i}\right|_{s_0},
\left. \frac{\partial}{\partial s^\ol\jmath}\right|_{s_0} \right)
\,:=\,
G_{i\ol\jmath}^{VM}\,:=\,
\langle\mu_i,\mu_j\rangle \hspace{5cm} \\
\hspace{2cm}=\, \int_X \tr (\Phi_{;i}\Phi^*_{;\ol\jmath})g\, dV +
\int_X \tr( g^\ba R^1_{i\ol\beta} R^1_{\alpha\ol\jmath}) g\, dV +
\int_X \tr( g^\ba R^2_{i\ol\beta} R^2_{\alpha\ol\jmath}) g\, dV\,\,
. \nonumber
\end{gather}
{\rm We set}
\begin{equation}\label{vmkf}
\omega^{VM}\, =\, \ii G_{i\ol\jmath}^{VM}(s) ds^i \wedge ds^\ol\jmath
\end{equation}
{\rm and call this Hermitian structure the}
vortex--moduli metric.
\end{definition}

\section{Fiber integral and Quillen metric}\label{Sect7}

We will use the following notion of a Kähler space. Let $W$ be a
polydisk together with a Kähler form $\omega_W$, and $S \,\subset \,
W$ a closed reduced analytic subspace. Then all tangent spaces
$T_sS$, $s\in S$, carry an induced Hermitian metric. If $S$ is any
reduced complex space together with a family of Hermitian metrics on
all tangent spaces $T_sS$, $s\in S$, which is locally of the above
kind, then the induced real $(1,1)$--form $\omega_S$ on $S$ is called
a {\it Kähler form}. Observe that Kähler forms on reduced complex
spaces possess local $\partial\overline\partial$--potentials of class
$C^\infty$. Clearly the restriction of a Kähler form on a complex
space to a complex analytic subspace is again a Kähler form.

We begin with a general remark about fiber integrals. For any
projection of differentiable manifolds
$$
Z\times R \,\longrightarrow\, R\, ,
$$
where $Z$
is compact of dimension $m$, the push forward of an $(m+k)$--form
$\chi$ of class $\cinf$ is a $\cinf$--form of degree $k$ given as a
fiber integral
$$
\int_Z \chi \,:=\, \int_{(Z\times R)/R} \chi\, .
$$
This process of fiber integration applied to proper smooth,
holomorphic maps is type preserving in the sense that for fibers of
complex dimension $n$ the fiber integral of any $(n+k,n+\ell)$--form
of class $\cinf$ is a differentiable $(k,\ell)$--form. In our case
the base space $S$ is not necessarily smooth, but the given
$(n+1,n+1)$--forms will be the local restrictions, from a smooth
ambient space, of $\ddb$--exact $\cinf$ forms. This will yield
$(1,1)$--forms with local $\ddb$--potential of class $\cinf$ in the
sense described above. (For details and generalizations see
\cite{va}.)

In place of earlier sub--indices (as in $E_i$), in
the following proposition we use the super--indices in order
not to confuse with the sub--indices for coordinates.

\begin{proposition}\label{pr:fibint}
Denote by $\Omega^\nu$ the curvature form of $(E^\nu,h^\nu)$, $\nu
\,=\,1\, ,2$. Then the following fiber integral formula for
the \pw form holds

\noindent \parbox{\textwidth}{
\begin{equation}\label{eq:fibint}
 \omega^{VM}\,=\,
\frac{1}{2}\sum_{\nu=1,2}\left(\int_X
\tr(\Omega^\nu\wedge\Omega^\nu)\wedge \frac{\omega_X^{n-1}}{(n-1)!}
+ \tau_\nu \int_X \tr \Omega^\nu \frac{\omega^n}{n!}\right) + \ddb
\int_X \tr(\Phi\Phi^*) \frac{\omega_X^n}{n!}
\end{equation}
}
\end{proposition}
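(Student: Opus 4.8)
The plan is to check the asserted identity componentwise. Both sides are real $(1,1)$--forms on $S$: the left--hand side by Definition~\ref{de:pwherm}, and the right--hand side because fiber integration over the compact fiber $X$ of the $\cinf$ forms $\tr(\Omega^\nu\wedge\Omega^\nu)\wedge\omega_X^{n-1}/(n-1)!$ and $\tr\Omega^\nu\wedge\omega_X^n/n!$ is type--preserving, while the last summand is visibly $\ddb$ of a function. It therefore suffices to compare the coefficients of $ds^i\wedge ds^{\ol\jmath}$; that is, to show that the $(i,\ol\jmath)$--component of the right--hand side equals $\ii\,G^{VM}_{i\ol\jmath}$, with the three integrals in Definition~\ref{de:pwherm} emerging one at a time.

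First I would extract the $(i,\ol\jmath)$--component of the Chern--Weil terms. Writing $\Omega^\nu$ on $X\times S$ in the combined coordinates as the sum of its purely fiberwise part $R^\nu_\ab\,dz^\alpha\wedge dz^{\ol\beta}$, its mixed part $R^\nu_{i\ol\beta}\,ds^i\wedge dz^{\ol\beta}+R^\nu_{\alpha\ol\jmath}\,dz^\alpha\wedge ds^{\ol\jmath}$, and its purely basic part $R^\nu_{i\ol\jmath}\,ds^i\wedge ds^{\ol\jmath}$, only two kinds of products in $\tr(\Omega^\nu\wedge\Omega^\nu)$ have fiber--bidegree $(1,1)$ and simultaneously base--bidegree $(1,1)$: a term $2\,\tr(R^\nu_\ab R^\nu_{i\ol\jmath})$ from (fiberwise)$\,\wedge\,$(basic) and a term $-2\,\tr(R^\nu_{i\ol\beta}R^\nu_{\alpha\ol\jmath})$ from (mixed)$\,\wedge\,$(mixed). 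Wedging with $\omega_X^{n-1}/(n-1)!$ and integrating over $X$ replaces $dz^\alpha\wedge dz^{\ol\beta}$ by the contraction $g^\ba$ against the volume form, so that, up to the bookkeeping of $\ii$ and of orientation constants, the second term yields $\int_X\tr(g^\ba R^\nu_{i\ol\beta}R^\nu_{\alpha\ol\jmath})\,g\,dV$ --- precisely the two curvature integrals appearing in $G^{VM}_{i\ol\jmath}$ --- while the first term yields a residual contribution $\int_X\tr\big((g^\ba R^\nu_\ab)\,R^\nu_{i\ol\jmath}\big)\,g\,dV$ that still has to be disposed of.

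Next I would treat the potential term. Since $\omega_X$ and the fiber volume are independent of $s$, one may differentiate under the integral; holomorphicity of $\Phi$ on $X\times S$ gives $\Phi_{;\ol\jmath}=0$, and compatibility of the Chern connection with the metric gives $\Phi^*_{;i}=0$, whence $\partial_i\partial_{\ol\jmath}\,\tr(\Phi\Phi^*)=\tr(\Phi_{;i}\Phi^*_{;\ol\jmath})+\tr(\Phi\,\Phi^*_{;\ol\jmath i})$. Because $\Phi^*_{;i\ol\jmath}=0$, the commutation rule \eqref{conv} applied in the base directions gives $\Phi^*_{;\ol\jmath i}=[R_{i\ol\jmath},\Phi^*]$, so that $\tr(\Phi\,\Phi^*_{;\ol\jmath i})=\tr(R^2_{i\ol\jmath}\phi^*\phi)-\tr(R^1_{i\ol\jmath}\phi\phi^*)$ after using the cyclicity of the trace. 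Thus the $\ddb$--summand contributes the first integral $\int_X\tr(\Phi_{;i}\Phi^*_{;\ol\jmath})\,g\,dV$ of $G^{VM}_{i\ol\jmath}$ together with exactly these two base--curvature--times--$\phi$ integrals.

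Finally I would invoke the coupled vortex equations \eqref{vortex2}--\eqref{vortex2a}, in the form $g^\ba R^1_\ab=\tau_1\,\mathrm{Id}_{E_1}-\phi\phi^*$ and $g^\ba R^2_\ab=\tau_2\,\mathrm{Id}_{E_2}+\phi^*\phi$, to process the residual term of the Chern--Weil computation. Its identity parts produce multiples of $\tau_\nu\int_X\tr R^\nu_{i\ol\jmath}\,g\,dV$, which are designed to be absorbed by the explicit correction terms $\tfrac12\tau_\nu\int_X\tr\Omega^\nu\wedge\omega_X^n/n!$; its $\phi\phi^*$-- and $\phi^*\phi$--parts reproduce precisely $\tr(R^1_{i\ol\jmath}\phi\phi^*)$ and $\tr(R^2_{i\ol\jmath}\phi^*\phi)$ and cancel the two residual integrals left over from the potential term. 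After these cancellations only the three integrals of Definition~\ref{de:pwherm} remain, and they assemble into $\omega^{VM}$. The crux of the argument, and its only genuinely delicate point, is the consistent bookkeeping of the numerical and sign factors --- the various $\ii$'s, the factor $\tfrac12$, and the orientation signs in the form algebra --- so that the $\tau$--contributions cancel against the correction terms and the $\phi$--contributions cancel against the potential term exactly, rather than merely up to a constant.
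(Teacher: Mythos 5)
Your argument is correct and follows essentially the same route as the paper's proof: expand $\tr(\Omega^\nu\wedge\Omega^\nu)\wedge\omega_X^{n-1}/(n-1)!$ into its mixed--mixed part (giving the two curvature integrals of $G^{VM}_{i\ol\jmath}$) and its fiber--basic part, substitute the coupled vortex equations \eqref{vortex2}--\eqref{vortex2a} for $g^{\ba}R^\nu_{\ab}$, and use holomorphicity of $\Phi$ on the total space to identify $\tr(R^1_{i\ol\jmath}\Phi\Phi^*)-\tr(R^2_{i\ol\jmath}\Phi^*\Phi)$ with $\tr(\Phi_{;i}\Phi^*_{;\ol\jmath})-\partial_i\partial_{\ol\jmath}\int\tr(\Phi\Phi^*)$, the $\tau_\nu$ pieces being absorbed by the first Chern form corrections. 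The sign and constant bookkeeping you defer is exactly the content of the paper's displayed computation, but the decomposition, the cancellations, and the key identities are identical.
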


\begin{proof}
We compute
\begin{gather*}
\zeta\,=\,\frac{1}{2}\sum_{\nu=1,2}\int_X
\tr(\Omega^\nu\wedge\Omega^\nu)\wedge \frac{\omega_X^{n-1}}{(n-1)!}
\,=\,  - \frac{1}{2}\sum_{\nu=1,2}\int_X
\tr(\ii\Omega^\nu\wedge\ii\Omega^\nu)
\wedge \frac{\omega_X^{n-1}}{(n-1)!}\\
\hspace{2cm} = \,\sqrt{-1}
\sum_{\nu=1,2}\int_X\tr(R^\nu_{\alpha \ol\jmath}\cdot
R^\nu_{i\ol\beta} - R^\nu_\ab \cdot R^\nu_{i\ol\jmath})g^\ba g
\,dV ds^i\wedge ds^\ol\jmath \\ =\, \ii
\left(\sum_{\nu=1,2}
\int_X\tr(R^\nu_{\alpha \ol\jmath}\cdot R^\nu_{i\ol\beta}) + \int_X
\tr(\Phi\Phi^*R^1_{i\ol\jmath}-\Phi^*\Phi R^2_{i\ol\jmath}-\tau_1
R^1_{i\ol\jmath}- \tau_2 R^2_{i\ol\jmath})\,g\, dV \right)
ds^i\wedge ds^\ol\jmath
\end{gather*}
by \eqref{vortex2} and \eqref{vortex2a}.

Now, on $X\times \{s\}$ we have
$$
\tr (\Phi^*(-R^1_{i\ol\jmath} \Phi
+\Phi R^2_{i\ol\jmath}))\,=\, \tr (\Phi^*(\Phi_{;i\ol\jmath} -
\Phi_{;\ol\jmath i}))\,=\,\tr( \Phi^*\Phi_{;i\ol\jmath})
$$
as $\Phi$ is a holomorphic section on the total space. So
\begin{gather*}
\int_X \tr(\Phi\Phi^*R^1_{i\ol\jmath}-\Phi^*\Phi R^2_{i\ol\jmath})\,
g \, dV \,=\, -\int_X \tr(\Phi_{;i\ol\jmath}\Phi^*) \, g \, dV
\hspace{2cm}\\ \hspace{2cm}
=\, \int_X \tr(\Phi_{;i}\Phi^*_{;\ol\jmath})
\, g \, dV - \frac{\partial^2}{\partial s_i \partial s_\ol\jmath}
\int_X \tr( \Phi \Phi^*) \, g \, dV.
\end{gather*}

Furthermore
$$
\int_X \tr (\Omega^\nu) \frac{\omega^n}{n!} \,=\,
\ii \int_X \tr
(R^\nu_{i\ol\jmath}) \, g \, dV ds^i\wedge ds^\ol\jmath\, .
$$
Combining these the proof of the proposition is complete.
\end{proof}

Next, we will express the above in terms of Chern character forms
described as
$$
\ch(\cE,h)\,=\, \sum_{k=0}^n \frac{1}{k!}\tr\left( \vtop{\hbox
{$\underbrace{\frac{\ii}{2\pi}\Omega \wedge\dots \wedge
\frac{\ii}{2\pi}\Omega }$}\hbox{\hspace{15mm}{$k\text{-times}$}}}
\right)
$$
with
$$
\ch_2(\cE,h)\,=\, \frac{1}{2}\left(c_1^2(\cE,h)-2c_2(\cE,h)\right)\, .
$$
In terms of Chern character forms and Chern forms Formula
(\ref{eq:fibint}) reads

\noindent \parbox{\textwidth}{
\begin{eqnarray}
\frac{1}{4\pi^2} \omega_{VM} &=& - \int_X \ch_2(\cE_1
\oplus\cE_2,h^1\oplus h^2)\wedge\frac{\omega_X^{n-1}}{(n-1)!} +\\
\nonumber& & + \frac{\tau_1}{2\pi} \int_X
c_1(\cE,h^1)\wedge\frac{\omega_X^n}{n!}+
\frac{\tau_2}{2\pi} \int_X c_1(\cE_2,h^2)\wedge\frac{\omega_X^n}{n!}\\
\nonumber & &+ \frac{\ii}{4\pi^2}\partial\ol\partial\int_X
\tr(\Phi\Phi^*)\wedge \frac{\omega_X^{n}}{n!}\, .
\end{eqnarray}}

{}From now on, till the end of this section,
we assume that $X$ is a \ka\ manifold whose \ka\ form
is the Chern form
$$
\omega_X\,=\,c_1(\cL\, ,h_\cL)
$$
of a positive Hermitian line bundle $(\cL\, ,h_\cL)$, in particular,
$X$ is a complex projective manifold.

Given a proper, smooth holomorphic map $$f\,:\,\mathcal X
\,\longrightarrow\, S$$ and a
locally free sheaf $\mathcal F$ on $\mathcal X$, the determinant
line bundle of $\mathcal F$ on $S$ is by definition $\det
\underline{\underline R}f_* \mathcal F$ \cite{km, bgs}.

The generalized Riemann--Roch theorem by Bismut, Gillet and Soul\'e
\cite{bgs} applies to Hermitian vector bundles $(\mathcal F\, , h)$ on
$\mathcal X$. It states that the determinant line bundle of
$\mathcal F$ on $S$ carries a Quillen metric, whose Chern form
equals the fiber integral
$$
\int_{\mathcal X/S} {\rm ch}(\mathcal F,h) {\rm td}(\mathcal
X/S,\omega_{\mathcal X})\, ,
$$
where ${\rm ch}(\mathcal F,h)$ and ${\rm td}(\mathcal
X/S,\omega_\mathcal X)$ denote respectively the Chern character form
for $(\mathcal F,h)$ and the Todd character form for the relative
tangent bundle; see \cite[Theorem 0.1]{bgs}, and also \cite{zt} for
$\dim X\,=\,1$.

Let $\cE$ stand for one of the Hermitian vector bundles $\cE_1$ or
$\cE_2$. Let $h$ denote the Hermitian metric on
$\cE$. We first mention
\begin{equation}
\ch(End(\cE))\,= \,r^2 + 2 r \ch_2(\cE) -c_1^2(\cE) + \dots
\end{equation}
where $r$ is the rank of $\cE$, so that for the virtual bundle
$End(\cE)- \cO^{r^2}$ the identity
$$
\ch(End(\cE)- \cO^{r^2}) \,=\, 2r \text{ch}_2(\cE)^2-c_1^2(\cE)+ \dots
$$
holds.

Now
\begin{eqnarray*}
&&\hspace{-8mm} \ch\left((End(\cE),h)\otimes \left( (\cL,h_\cL)-
(\cL^{-1},h_\cL^{-1})\right)^{\otimes (n-1)}\right)  \\
&=& \ch_2\left(End(\cE),h\right) \cdot 2^{n-1} \omega_X^{n-1} +
\ldots\\
&=& \left(2r\left( \frac{1}{2}\tr\left(\frac{\ii}{2\pi}\Omega \wedge
\frac{\ii}{2\pi}\Omega\right)\right) - \left(\tr
\frac{\ii}{2\pi}\Omega \right)^2 \right)2^{n-1}
\omega_X^{n-1} +\dots\\ &=&
2^{n-1}\left(r\cdot\tr\left(\frac{\ii}{2\pi}\Omega\wedge
\frac{\ii}{2\pi}\Omega\right)-\left(\tr
\frac{\ii}{2\pi}\Omega\right)^2\right) \omega_X^{n-1} +\dots
\end{eqnarray*}
The highest exterior power $\Lambda^r \cE$ carries the induced
Hermitian metric $\wh h$, for which the following identity holds:
\begin{eqnarray*}
&& \hspace{-2cm}\ch\left(\left(\left(\Lambda^r \cE,\wh
h\right)-(\Lambda^r \cE,\wh h)^{-1}\right)^{\otimes 2} \cdot \left(
(\cL,h_\cL)-(\cL^{-1},h^{-1})\right)^{\otimes(n-1)}\right)\\
\qquad &=& 2^{n+1} c_1^2(\cE,h)\cdot(\cL,h_\cL)^{n-1}  +\dots \\
&=& 2^{n+1} c_1^2(\cE,h)\cdot \omega_X^{n-1} +\dots \\
&=& 2^{n+1}\left(\tr\frac{\ii}{2\pi}\Omega\right)^2 \omega_X^{n-1} +
\dots
\end{eqnarray*}

Hence we have the following theorem:

\begin{theorem}\label{th.fif}
The \pw metric defined in \eqref{vmkf} has the following expression:
\noindent \parbox{\textwidth}{
\begin{eqnarray*}
&& \hspace{-1cm} \frac{1}{4\pi^2}\omega_{VM} \,=
\\
 &&\sum_{\nu=1,2}\left(-\frac{1}{2^n r_\nu (n-1)!}\int\ch\left(End(\cE_\nu)
\otimes (\cL-\cL^{-1})^{\otimes(n-1)}\right)\right.\\
 &&  - \frac{1}{2^{n+2} r_\nu (n-1)!} \int \ch\left(
(\Lambda^{r_\nu}\cE_\nu-(\Lambda^{r_\nu} \cE_\nu)^{-1})^{\otimes
2}\otimes
(\cL-\cL^{-1})^{\otimes(n-1)}\right)\\
 &&\left. + \frac{\lambda}{2\pi}
\frac{1}{2^n n!}\int \ch\left(\left( \Lambda^{r_\nu}
\cE_\nu-(\Lambda^{r_\nu} \cE_\nu)^{-1}\right)\otimes
(\cL-\cL^{-1})^{\otimes n}\right)\right)\\
 && + \frac{1}{4\pi^2} \ii
\partial\ol\partial \int \tr(\Phi\wedge \Phi^*)\wedge
\frac{\omega_X^n}{n!}\,.
\end{eqnarray*} }

\end{theorem}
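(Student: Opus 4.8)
The plan is to start from the Chern--character form of the fiber integral formula obtained from Proposition~\ref{pr:fibint}, i.e.\ the displayed identity expressing $\frac{1}{4\pi^2}\omega_{VM}$ as $-\int_X\ch_2(\cE_1\oplus\cE_2)\wedge\omega_X^{n-1}/(n-1)!$ plus the two $c_1$--terms weighted by $\tau_1/2\pi$ and $\tau_2/2\pi$ plus the $\partial\ol\partial$--exact correction. Since $\ch_2(\cE_1\oplus\cE_2)=\ch_2(\cE_1)+\ch_2(\cE_2)$, the whole task reduces to rewriting, for each $\nu=1,2$, the three fiber integrals $\int_X\ch_2(\cE_\nu)\wedge\omega_X^{n-1}$, $\int_X c_1^2(\cE_\nu)\wedge\omega_X^{n-1}$ and $\int_X c_1(\cE_\nu)\wedge\omega_X^n$ in terms of the three auxiliary Chern characters appearing in the statement, and then collecting coefficients.

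I would introduce the three building blocks
\[
A_\nu:=\int\ch\!\left(End(\cE_\nu)\otimes(\cL-\cL^{-1})^{\otimes(n-1)}\right),\quad
B_\nu:=\int\ch\!\left((\Lambda^{r_\nu}\cE_\nu-(\Lambda^{r_\nu}\cE_\nu)^{-1})^{\otimes 2}\otimes(\cL-\cL^{-1})^{\otimes(n-1)}\right),
\]
\[
C_\nu:=\int\ch\!\left((\Lambda^{r_\nu}\cE_\nu-(\Lambda^{r_\nu}\cE_\nu)^{-1})\otimes(\cL-\cL^{-1})^{\otimes n}\right).
\]
The two characteristic--class computations carried out just above the theorem already evaluate the relevant parts of $A_\nu$ and $B_\nu$: they give $A_\nu=2^{n-1}\int_X(2r_\nu\ch_2(\cE_\nu)-c_1^2(\cE_\nu))\wedge\omega_X^{n-1}$ and $B_\nu=2^{n+1}\int_X c_1^2(\cE_\nu)\wedge\omega_X^{n-1}$. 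For $C_\nu$ the analogous elementary expansions $\ch(\Lambda^{r_\nu}\cE_\nu-(\Lambda^{r_\nu}\cE_\nu)^{-1})=2c_1(\cE_\nu)+\dots$ and $\ch(\cL-\cL^{-1})=2\,\omega_X+\dots$ yield $C_\nu=2^{n+1}\int_X c_1(\cE_\nu)\wedge\omega_X^n$. Inverting this triangular system gives $\int_X c_1^2(\cE_\nu)\wedge\omega_X^{n-1}=2^{-(n+1)}B_\nu$, then $\int_X\ch_2(\cE_\nu)\wedge\omega_X^{n-1}=2^{-n}r_\nu^{-1}A_\nu+2^{-(n+2)}r_\nu^{-1}B_\nu$, and $\int_X c_1(\cE_\nu)\wedge\omega_X^n=2^{-(n+1)}C_\nu$.

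Substituting these three expressions into the Chern--character form of Proposition~\ref{pr:fibint} and reading off the coefficients of $A_\nu$, $B_\nu$ and $C_\nu$ (the $\partial\ol\partial$--term being carried along unchanged) reproduces the asserted formula: the $\ch_2$--term supplies the coefficients $-1/(2^n r_\nu(n-1)!)$ and $-1/(2^{n+2}r_\nu(n-1)!)$, while the weight $\tau_\nu/(2\pi n!)$ combined with $\int_X c_1(\cE_\nu)\wedge\omega_X^n=2^{-(n+1)}C_\nu$ fixes the $c_1$--contribution. The one genuine point requiring care, and the main obstacle, is justifying that the omitted ``$\dots$'' terms contribute nothing. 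This is a bidegree argument: fiber integration over the $n$--dimensional fiber $X$ annihilates everything except the component of fiber type $(n,n)$, and since $\omega_X$ is of fiber type $(1,1)$, any term carrying a power $\omega_X^k$ with $k$ larger than the exhibited $n-1$ (respectively $n$)---such as the higher odd powers in $2\sinh\omega_X$ or in $2\sinh c_1(\cE_\nu)$, the rank term $r_\nu^2$ in $\ch(End\,\cE_\nu)$, and the off--degree pieces of the factors---either exceeds the top fiber power or fails to pair to fiber type $(n,n)$ against a $(1,1)$--form on $S$, and hence drops out. Once this selection is in place, the remaining verification is purely the numerical bookkeeping of the powers of $2$, the factorials and the $2\pi$'s indicated above.
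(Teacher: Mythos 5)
Your proposal is correct and takes essentially the same route as the paper, which deduces the theorem directly from the Chern--character form of Proposition~\ref{pr:fibint} combined with the two characteristic--class expansions displayed just before the statement; you merely make explicit the analogous expansion for the third building block $C_\nu$, the inversion of the triangular system, and the bidegree argument killing the omitted terms, all of which the paper leaves implicit. Your bookkeeping also shows that the constant $\lambda$ appearing in the statement must be read as $\tau_\nu/2$, since the $c_1$--contribution $\frac{\tau_\nu}{2\pi}\int_X c_1(\cE_\nu)\wedge\frac{\omega_X^n}{n!}$ equals $\frac{\tau_\nu}{2}\cdot\frac{1}{2\pi}\cdot\frac{1}{2^n n!}\,C_\nu$.
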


Using Theorem \ref{th.fif}, we will express the \pw Kähler form as
the curvature form of a holomorphic Hermitian line bundle.

Let
$$
q\, :\, X\times S \,\longrightarrow\, S
$$
be the canonical projection, where $S$ stands for the base space of
a universal deformation of a stable triple with solution of the
coupled vortex equations. Since our construction is functorial, the
construction descends to the moduli space $\cM$ (after taking
suitable powers of the line bundles on the base).

We introduce the following determinant line bundles $\delta_{j\nu}$
where $j\,=\,1\, ,2\, ,3$ and $\nu \,=\, 1\, ,2$,
equipped with Quillen metrics $h^Q_j$:
\begin{eqnarray*}
\delta_{1\nu} &=& \det \underline{\underline R}q_* \left(
End(\cE_\nu)\otimes
(\cL-\cL^{-1})^{\otimes(n-1)}\right)\\
 \delta_{2\nu} &=& \det
\underline{\underline R}q_* \left(\left(
\Lambda^{r_\nu}\cE_\nu-(\lambda^{r_\nu}\cE_\nu)^{-1}\right)^{\otimes
2} \otimes
(\cL-\cL^{-1})^{\otimes(n-1)}\right) \\
  \delta_{3\nu} &=& \det
\underline{\underline R}q_* \left(\left(
\Lambda^{r_\nu}\cE_\nu-(\lambda^{r_\nu}\cE_\nu)^{-1}\right) \otimes
(\cL-\cL^{-1})^{\otimes n}\right).
\end{eqnarray*}
Setting
$$
\chi\,=\,\int \tr (\Phi\wedge \Phi^*)\wedge \frac{\omega_X^n}{n!}
$$
we equip the trivial bundle $\cO_{\cM_H}$ with the Hermitian metric
$e^\chi$.

Combining Theorem \ref{th.fif} and \cite[Theorem 0.1]{bgs} we have
the following theorem:

\begin{theorem}\label{thm3}
The \pw K\"ahler form is a linear combination of the $(1,1)$--forms
$c_1(\delta_{j\nu},h^Q_{j\nu})$, $j\,=\,1\, ,2\, ,3$, $\nu
\,=\,1\, ,2$, and
$c_1(\cO_{\cM_H},e^\chi)$. For rational $\tau_\nu$, a multiple of
the \pw form is equal to the Chern form of an Hermitian line bundle.
\end{theorem}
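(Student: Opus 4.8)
The plan is to read Theorem~\ref{th.fif} together with the Bismut--Gillet--Soul\'e theorem \cite[Theorem 0.1]{bgs} and to match terms: the bundles $\delta_{j\nu}$ and the metrized trivial bundle $(\cO_{\cM_H},e^\chi)$ have been manufactured precisely so that their Chern forms reproduce the individual summands on the right--hand side of Theorem~\ref{th.fif}. Concretely, I would first apply \cite[Theorem 0.1]{bgs} to each $\delta_{j\nu}=\det\uu{R}q_*\mathcal F_{j\nu}$, obtaining
$$
c_1(\delta_{j\nu},h^Q_{j\nu})\,=\,\int_{(X\times S)/S}\ch(\mathcal F_{j\nu},h)\,{\rm td}(T_{(X\times S)/S},\omega_X)\, .
$$
Since the determinant of the derived push--forward and the Quillen curvature are additive over short exact sequences, this formula extends $\mathbb Z$--linearly to the virtual bundles $\mathcal F_{j\nu}$ (the tensor factors $(\cL-\cL^{-1})^{\otimes k}$ and the differences $\Lambda^{r_\nu}\cE_\nu-(\Lambda^{r_\nu}\cE_\nu)^{-1}$), so no honesty hypothesis on $\mathcal F_{j\nu}$ is needed.

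The crucial point, and the step I expect to be the main obstacle, is to show that the Todd factor contributes \emph{only in degree zero}, so that the Bismut--Gillet--Soul\'e integrals collapse to the bare Chern--character integrals of Theorem~\ref{th.fif}. Here one uses that the base $X$ is fixed: the total space is the product $X\times S$, so $T_{(X\times S)/S}$ is pulled back from $X$, whence both ${\rm td}(T_{(X\times S)/S})$ and the factor $\ch\big((\cL-\cL^{-1})^{\otimes k}\big)$ carry no $ds^i$, $ds^{\ol\jmath}$ components; moreover the Todd form has lowest order part $1$. Next I would compute $\ch\big((\cL-\cL^{-1})^{\otimes k}\big)=(2\sinh\omega_X)^k=2^k\omega_X^k+O(\omega_X^{k+2})$, whose leading term has fibre degree $2k$ and whose higher terms have fibre degree $\ge 2k+2$. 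A bidegree count then shows that, after fibre integration over the $2n$--real--dimensional $X$ to a $(1,1)$--form on $S$, both units of $S$--degree must come from the $\ch(End(\cE_\nu))$ (respectively $\ch(\Lambda^{r_\nu}\cE_\nu-\cdots)$) factor, leaving exactly $2n-2$ (respectively $2n$) units of fibre degree for the remaining factors; this forces the $(\cL-\cL^{-1})$--factor to appear at its leading term and leaves no room for any positive--degree part of the Todd form. Thus ${\rm td}$ enters only through its constant term $1$, and $c_1(\delta_{j\nu},h^Q_{j\nu})$ equals precisely the corresponding fibre integral $\int_X\ch(\cdots)$ of Theorem~\ref{th.fif}.

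With this in hand both assertions follow formally. Matching coefficients term by term identifies $\tfrac{1}{4\pi^2}\omega_{VM}$ with the prescribed linear combination of the $c_1(\delta_{j\nu},h^Q_{j\nu})$, the last summand $\tfrac{1}{4\pi^2}\ddb\int_X\tr(\Phi\wedge\Phi^*)\tfrac{\omega_X^n}{n!}=\tfrac{1}{4\pi^2}\ddb\chi$ being, up to the universal constant, the Chern form $c_1(\cO_{\cM_H},e^\chi)$ of the metrized trivial bundle. For the second statement I would invoke additivity of first Chern forms under tensor product and inverse: a $\mathbb Z$--linear combination of the $c_1(\delta_{j\nu},h^Q_{j\nu})$ and $c_1(\cO_{\cM_H},e^\chi)$ is itself the Chern form of the tensor product $\bigotimes_{j,\nu}\delta_{j\nu}^{\otimes a_{j\nu}}\otimes\cO_{\cM_H}^{\otimes a_0}$ equipped with the induced Hermitian metric (the tensor product of the Quillen metrics and $e^\chi$). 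When all $\tau_\nu$ are rational the coefficients in Theorem~\ref{th.fif} are rational, so clearing denominators exhibits a positive integer multiple of $\omega_{VM}$ as the Chern form of such a Hermitian line bundle. Finally, functoriality of the whole construction lets these bundles and metrics descend, after passing to suitable powers, from the base $S$ of a universal deformation to the moduli space $\cM$, as required.
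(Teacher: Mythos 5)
Your proposal is correct and follows the paper's route exactly: the paper's entire proof of Theorem~\ref{thm3} is the single remark that Theorem~\ref{th.fif} combined with \cite[Theorem 0.1]{bgs} yields the statement, and you have merely made explicit the additivity over virtual bundles, the collapse of the Todd factor, and the descent to $\cM$ that are left implicit there. One small point in your bidegree count: for $\delta_{1\nu}$ and $\delta_{2\nu}$ the remaining factors could a priori absorb $2n$ rather than $2n-2$ units of fibre degree, via the term ${\rm td}_1(T_X)\cdot 2^{n-1}\omega_X^{n-1}$ paired with the purely base--directed $(1,1)$--part of $\ch_1$ of the coefficient bundle, so to conclude that the Todd form enters only through its constant term you also need the observation that $c_1(End(\cE_\nu))\,=\,0$ and that $\ch\bigl((\Lambda^{r_\nu}\cE_\nu-(\Lambda^{r_\nu}\cE_\nu)^{-1})^{\otimes 2}\bigr)$ has vanishing $\ch_0$ and $\ch_1$; with that noted, your argument is complete.
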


We note that
this \pw Kähler metric coincides with the one constructed
in \cite{AG} and \cite{g-p}, where the moduli spaces
of triples and pairs respectively
have been constructed as K\"ahler quotients.

\section{Curvature of the  \pw metric}\label{Sect8}

In this section $X$ will be a compact K\"ahler manifold.

We begin by establishing
a collection of identities for the harmonic \ks
tensors
$$
\mu_i \,\in\, \cA^{0,0}(X,\ho)\oplus \cA^{0,1}(X,\eoe)\, .
$$
In
particular, we need to understand covariant derivatives with respect
to the base directions.

The symmetry

\noindent
 \parbox{\textwidth}{
 \begin{equation}
 \mu_{i;k}\,=\,\mu_{k;i}
 \end{equation}
}

\smallskip\noindent follows immediately from the definition.

We set $R_{i\ol\beta}\,=\,(R_{i\ol\beta}^1,R_{i\ol\beta}^2)$
et cetera.  We
compute the components of $d\mu_{i;k}$: Because of
$$
\Phi_{;ik\ol\beta}\,=\, \Phi_{;i\ol\beta k}-
R^1_{k\ol\beta}\Phi_{;i}+\Phi R^2_{k\ol\beta}\,=
\, (\Phi_{;\ol\beta i} -
R^1_{i\ol\beta}+ R^2_{k\ol\beta})_k - R^1_{k\ol\beta}\Phi_{;i}+\Phi
R^2_{k\ol\beta}
$$
we have
$$
(d\mu_{i;k})_1\,=\,( -\Phi_{;ik\ol\beta}-R^1_{i\ol;k}\Phi+\Phi
R^2_{i\ol\beta;k})dz^\ol\beta\,=\,
(R^1_{i\ol\beta}\Phi_{;k}-\Phi_{;k}R^2_{i\ol\beta}+
R^1_{k\ol\beta}\Phi_{;i}-\Phi_{;i}R^2_{k\ol\beta})dz^\ol\beta\, .
$$
Furthermore, from the identity
$$
R^1_{i\ol\beta;k\ol\delta}\,=\,R^1_{i\ol\beta;\ol\delta
k}-[R^1_{k\ol\delta},R^1_{i\ol\delta}]
$$
it follows that
$$
(d(\mu_{i;k}))_2
$$
$$
\,=\,-(R^1_{i\ol\beta;k\ol\delta}dz^\ol\beta\wedge
dz^\ol\delta\, ,R^2_{i\ol\beta;k\ol\delta}dz^\ol\beta\wedge
dz^\ol\delta)\,=\,
-([R^1_{k\ol\delta},R^1_{i\ol\delta}]\;dz^\ol\beta\wedge
dz^\ol\delta,[R^1_{k\ol\delta}\, ,R^2_{i\ol\delta}]\;dz^\ol\beta\wedge
dz^\ol\delta)\, .
$$
The last term equals
$$
([R^1_{i\ol\beta}dz^\ol\beta,R^1_{k\ol\delta}dz^\ol\delta]\, ,
[R^2_{i\ol\beta}dz^\ol\beta\, ,R^2_{k\ol\delta}dz^\ol\delta])\, ,
$$
(involving a symmetric product of one--forms with values in an
endomorphism bundle).

Now we can introduce an exterior product on $\wt C^\bullet$, in
particular, we define a symmetric exterior product $\wt C^1 \times
\wt C^1 \longrightarrow \wt C^2$:
\begin{equation}
 [\mu_i\wedge \mu_k]\, :=\,
\left(-\Phi_{;i}R^2_{k\ol\beta}+
 R^1_{k\ol\beta}\Phi_{;i}+R^1_{i\ol\beta} \Phi_{;k} -
 \Phi_{;k} R^2_{i\ol\beta}\, , ([R^1_{i\ol\beta} dz^\beta,
 R^1_{k\ol\delta}dz^\ol\delta],[R^2_{i\ol\beta} dz^\beta,
 R^2_{k\ol\delta}dz^\ol\delta])\right)\, ,
 \end{equation}
where $\mu_i\, =\,
(-\Phi_{;i},(R^1_{i\ol\beta}dz^\ol\beta\, ,
 R^2_{i\ol\beta}dz^\ol\beta))$ and
$\mu_k\, =\,(-\Phi_{;k},(R^1_{k\ol\delta}dz^\ol\delta\, ,
 R^2_{i\ol\delta}dz^\ol\delta))$.

Now

 \noindent
 \parbox{\textwidth}{
 \begin{equation}\label{dmuik}
 d(\mu_{i;k})+ [\mu_i\wedge \mu_k]\,=\,0\, .
 \end{equation}
}

Next we compute
$$
d^*(\mu_{i;k})\,=\, (-\Phi_{;ik}\Phi^*-
R^1_{i\ol\beta;k\alpha}g^\ba\, ,
\Phi^*\Phi_{;ik}- R^2_{i\ol\beta;k\alpha}g^\ba).
$$
Because of the coupled vortex equations \eqref{vortex2} and
\eqref{vortex2a} we get

 \noindent
 \parbox{\textwidth}{
 \begin{equation}\label{dstarmuik}
 d^*(\mu_{i;k})\,=\, 0\, .
 \end{equation}
}

We note that
$$
\mu_{i;\ol\jmath} \,=\,( - \Phi_{;i\ol\jmath}\, ,
(R^1_{i\ol\beta;\ol\jmath}dz^\ol\beta,
R^2_{i\ol\beta;\ol\jmath}dz^\ol\beta))\, .
$$
Because of
$$
\Phi_{;i\ol\jmath}\,= \,\Phi_{;\ol\jmath i}- R^1_{i\ol\jmath}\Phi
-\Phi R^2_{i\ol\jmath}\,=\, -\Delta R_{i\ol\jmath}
$$
we have
$$
\mu_{i;\ol\jmath}\,=\, (\Delta R_{i\ol\jmath},\ol\partial
R_{i\ol\jmath})\, ,
$$
which means that

 \noindent
    \parbox{\textwidth}{
        \begin{equation}\label{mudr}
            \mu_{i;\ol\jmath}\,=\, dR_{i\ol\jmath},
        \end{equation}
    }

\noindent where the tensor $R_{i\ol\jmath}$ is considered as a
section of $\wt C^0$.

We can use Hodge theory on the complex $\wt C^\bullet$. So
$$
d^*(\mu_{i;\ol\jmath})\,=\, d^*d R_{i\ol\jmath}
\,= \,\Box R_{i\ol\jmath}\, .
$$

For any section $f\,=\,(f_1\, ,f_2)$ of $\wt C^0$, we compute
\begin{gather*}
d^*d f\,=\, d^*(\Delta f\, ,\db f)\,=\,
( (\Delta f) \Phi^*+ \db^*\db f_1\, ,
-\Phi^* \Delta f + \db^*\db f_2)\\
=\, (f_1\Phi\Phi^* -\Phi f_2 \Phi^*
- g^\ba f_{1;\ba}\, ,- \Phi^* f_1\Phi + \Phi^*\Phi f_2 - g^\ba
f_{2,\ba} )\, . \nonumber
\end{gather*}
We apply the formula for $f\,=\,R_{i\ol\jmath}$. It involves (again by
\eqref{vortex2} and \eqref{vortex2a})
\begin{gather*}
g^\ba R^1_{i\ol\jmath; \ol\beta \alpha} \,=\, g^\ba(
R^1_{i\ol\beta;\alpha\ol\jmath} +
[R^1_{\alpha\ol\jmath},R^1_{i\ol\beta}])\,=\, -
(\Phi\Phi^*)_{i\ol\jmath} + g^\ba [R^1_{\alpha\ol\jmath},
R^1_{i\ol\beta}]\\
=\,-\Phi_{;i}\Phi^*_{;\ol\jmath}-(\Phi_{\ol\jmath j}- R^1_{i\ol\jmath}
\Phi + \Phi R^2_{i\ol\jmath})\Phi^* + g^\ba [R^1_{\alpha\ol\jmath},
R^1_{i\ol\beta}]
\end{gather*}
so that
$$
(d^*dR_{i\ol\jmath})_1 \,=\, \Phi_{;i}\Phi^*_{\ol\jmath}-
g^\ba[R^1_{\alpha\ol\jmath},R^1_{i\ol\beta}].
$$
In a similar way
$$
(d^*dR_{i\ol\jmath})_2 \,=\, -\Phi^*_{\ol\jmath}\Phi_{;i}-
g^\ba[R^2_{\alpha\ol\jmath},R^2_{i\ol\beta}]\, .
$$
There is a natural (pointwise) inner product
$$
\wt C^1 \times \wt C^1 \,\longrightarrow\, \wt C^0\, .
$$
defined for sections
$(a\, ,b)\,=\, (a\, ,(b_{1\ol\beta}dz^\ol\beta\, ,
b_{2\ol\beta}dz^\ol\beta))$
and
$(a'\, ,b')\,=\,(a'\, ,(b_1'\, ,b_2'))$ of
$$
\cA^{0,0}(Hom(E_2,E_1))\oplus
\cA^{0,1}(End(E_1)\oplus End(E_2))
$$
by

\noindent
    \parbox{\textwidth}{\begin{equation}
(a\, ,b)\cdot(a'\, ,b')\,:=\, ( a {{a'}}^* + g^\ba [b_{1\ol\beta}\, ,
b^{'*}_{1\alpha}]\, , -{{a'}}^* a + g^\ba [b_{2\ol\beta}\, ,
b^{'*}_{2\alpha}]\, .
\end{equation} }

Then the equality

\noindent
    \parbox{\textwidth}{
        \begin{equation}\label{dstardrij}
            d^*dR_{i\ol\jmath}\,=\, \mu_i \cdot \mu^*_{\ol\jmath}
        \end{equation}
    }

\noindent holds.

Now, we are in a position to compute the curvature tensor of the \pw
metric.

We refer to Definition \ref{de:pwherm} for the metric tensor
$G^{VM}_{i\ol\jmath}$.

First, we compute first partial derivatives of the metric tensor.

We claim

\noindent
    \parbox{\textwidth}{
        \begin{equation}\label{Gijk}
            G^{VM}_{i\ol\jmath|k}(s)\,= \,\int_{X\times\{s\}} \tr
(\mu_{i;k}
            \mu^*_{\ol\jmath}) g\, dV
        \end{equation}
    }

\begin{proof}
  We have
$$
\int\tr( \mu_i \mu^*_{\ol\jmath; k}) g \, dV \,=\, \langle \mu_i\, ,
dR_{j\ol k}
    \rangle \,=\, \langle d^* \mu_i\, , R_{j,\ol k}\rangle \, =\, 0
$$
because of \eqref{dstarmu} and \eqref{mudr}.
\end{proof}
At a given point $s_0 \,\in\, S$ we introduce holomorphic
normal coordinates of the second kind, which means that
the K\"ahler form coincides with the constant one
in terms of the coordinate chart, up to order two at $s_0$,
or equivalently, the partial derivatives of the local expression
of the Hermitian metric vanish at $s_0$. From
\eqref{Gijk} it follows that this condition is equivalent to
the condition that the
harmonic projections of all $\mu_{i;k}$ vanish (at $s_0$):

\noindent
    \parbox{\textwidth}{
        \begin{equation}\label{harm}
            H(\mu_{i;k}(s_0))\, =\, 0\, .
        \end{equation}
    }

We denote by $G$ the (abstract) Green's operator. Then \eqref{harm}
and \eqref{dstarmuik} imply that, for $s\,=\,s_0$,
$$
\mu_{i;k}\,=\,Gd^*d\mu_{i;k}\,=\,d^*Gd\mu_{i;k}\, .
$$
Together with
\eqref{dmuik} we get

\noindent
    \parbox{\textwidth}{
        \begin{equation}\label{gmuwedgemu}
           \mu_{i;k}(s_0)\,=\,-d^*G[\mu_i\wedge \mu_k]\, .
        \end{equation}
    }

Now (in terms of normal coordinates)
$$
-R^{VM}_{i\ol\jmath k \ol\ell}\,=\, G^{VM}_{i\ol\jmath|k\ol\ell}\,=\,
\int
\tr(\mu_{i;k\ol\ell}\cdot \mu^*_\ol\jmath ) \, g\, dV +  \int
\tr(\mu_{i;k}\cdot \mu^*_{\ol\jmath;\ol\ell}) \, g\, dV\, .
$$
According to \eqref{gmuwedgemu} the second integral equals
$$
\int \tr\left( G([\mu_i\wedge \mu_k]) \cdot
[\mu^*_{\ol\jmath}\wedge\mu^*_{\ol\ell}] \right)\, g\, dV.
$$
We compute the first integral: It equals
$$
\int \tr(\Phi_{;ik\ol\ell} \cdot \Phi^*_\ol\jmath)\,g\, dV +
\sum_{\nu=1,2}\int \tr(
R^\nu_{i\ol\beta;k\ol\ell}R^\nu_{\alpha\ol\jmath})\,g\, dV\, .
$$
Therefore, we have
$$
I\,=\, I_0+I_1+I_2\, ,
$$
where
$$I \,:=\, \int
\tr(\mu_{i;k\ol\ell}\cdot \mu^*_\ol\jmath ) \, g\, dV\,\, ,\, ~\,
I_0\,:=\, \int \tr(\Phi_{;ik\ol\ell} \cdot \Phi^*_\ol\jmath)\,g\, dV
$$
and
$$
I_i \,:=\, \sum_{\nu=1,2}\int \tr(
R^\nu_{i\ol\beta;k\ol\ell}R^\nu_{\alpha\ol\jmath})\,g\, dV\, ,
$$
where $i\,=\,1\, ,2$.

Now with convention \eqref{conv}
$$
\Phi_{;ik\ol\ell}\,=\, (\Phi_{;\ol\ell i}- [R_{i\ol\ell}\, ,
\Phi])_{;k} - [R_{k\ol\ell}\, ,\Phi_{;i}]
$$
so that
$$
I_0\,=\, -\int
\tr\left(([R_{i\ol\ell}\, ,\Phi_{;k}]+[R_{k\ol\ell}\, ,\Phi_{;i}]+
[R_{i\ol\ell;k}\, , \Phi])\Phi^*_{;\ol\jmath} \right)\,g\, dV.
$$
Next,
$$
R_{i\ol\beta;k\ol\ell}^\nu\,=\,
R_{i\ell;k\ol\beta}^\nu+[R_{k\ol\beta}^\nu\, ,R_{i\ol\ell}^\nu]+
[R_{i\ol\beta}^\nu\, ,R_{k\ol\ell}^\nu]
\; ; \quad \nu\,=\,1\, ,2
$$
We compute the contributions of $R_{i\ell;k\ol\beta}^\nu$ to the
integrals $I_\nu$; $\nu\,=\,1\, ,2$ and get
$$
-\sum_{\nu=1,2} \int \tr(g^\ba R^\nu_{\alpha\ol\beta;\ol\jmath}
R^\nu_{i\ol\ell;k} )\,g\, dV \,=\, \int \tr ( \Phi\Phi^*_{;\ol\jmath}
R^1_{i\ol\ell;k} - \Phi^*_{;\ol\jmath}\Phi R^2_{i\ol\ell;k})\,g\,dV.
$$
These cancel out together with corresponding terms of $I_0$. Hence
\begin{eqnarray*}
I &=& - \int \tr\left( R^1_{i\ol\ell}(-\Phi_{;k}\Phi^*_{;\ol\jmath}
+ g^\ba [R^1_{\alpha\ol\jmath},R^1_{k\ol\beta}]) + R^2_{i\ol\ell}(
\Phi^*_{;\ol\jmath}\Phi_{;k} +
g^\ba[R^2_{\alpha\ol\jmath},R^2_{k\ol\beta} ] ) \right) \, g\, dV\\
&& - \int \tr\left( R^1_{k\ol\ell}(-\Phi_{;i}\Phi^*_{;\ol\jmath} +
g^\ba [R^1_{\alpha\ol\jmath},R^1_{i\ol\beta}]) + R^2_{k\ol\ell}(
\Phi^*_{;\ol\jmath}\Phi_{;i} +
g^\ba[R^2_{\alpha\ol\jmath},R^2_{i\ol\beta} ] ) \right) \, g\, dV.
\end{eqnarray*}
By \eqref{dstardrij} these terms read
$$
I\,=\, \int \tr(\Box R_{k\ol\jmath}\cdot R_{i\ol\ell} )\, g\, dV + \int
\tr(\Box R_{i\ol\jmath}\cdot R_{k\ol\ell} )\, g\, dV
$$
where again $R_{k\ol\jmath}\,=\,
(R_{k\ol\jmath}^1\, ,R_{k\ol\jmath}^2)$.
Since our aim is an expression in terms of the harmonic \ks tensors,
again we use the Green's operator (which is here the inverse
$\Box_0^{-1}$ of the Laplacian, restricted to the space of
differentiable endomorphisms such that the mean trace vanishes). So
$$
I\,=\, -\int\tr\left((\mu_i\cdot \mu_{\ol\jmath}^*) G(\mu_{k}\cdot
\mu_{\ol\ell}^* )\right)\,g\,dV - \int\tr\left((\mu_k\cdot
\mu_{\ol\jmath}^*) G(\mu_{i}\cdot \mu_{\ol\ell}^* )\right)\,g\,dV\, .
$$
We observe that the result of our curvature computation is
independent of the choice of normal coordinates.

\begin{theorem}\label{thm.curv.}
  The curvature tensor of the \pw metric on the moduli space of
  solutions of the coupled vortex equations equals

\noindent \parbox{\textwidth}{
  \begin{eqnarray}
R^{VM}_{i\ol\jmath k\ol\ell}(s) &=& -\int_{X\times \{s\}} \tr\left(
[\mu_i\wedge \mu_k] \cdot G[\mu^*_{\ol\jmath}\wedge\mu^*_{\ol\ell}]
\right)\, g\, dV \nonumber
\\
&& + \int_{X\times \{s\}}\tr\left((\mu_i\cdot \mu_{\ol\jmath}^*)\,
G(\mu_{k}\cdot \mu_{\ol\ell}^* )\right)\,g\,dV + \int_{X\times
\{s\}}\tr\left((\mu_k\cdot \mu_{\ol\jmath}^*)\, G(\mu_{i}\cdot
\mu_{\ol\ell}^* )\right)\,g\,dV
  \end{eqnarray}
  }
\end{theorem}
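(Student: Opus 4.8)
The plan is to read off the curvature from the standard formula for the curvature of a Hermitian metric, evaluated at the base point $s_0$ in the holomorphic normal coordinates of the second kind that have already been introduced. In such coordinates the first derivatives of $G^{VM}_{i\ol\jmath}$ vanish at $s_0$; by \eqref{Gijk} this is exactly the vanishing \eqref{harm} of the harmonic projections $H(\mu_{i;k}(s_0))$, so the Christoffel correction terms drop out and the curvature reduces to the single second derivative
$$
-R^{VM}_{i\ol\jmath k\ol\ell}(s_0)\,=\,G^{VM}_{i\ol\jmath|k\ol\ell}(s_0)\, .
$$
First I would differentiate the first-derivative formula \eqref{Gijk} once more in the $\ol\ell$ direction, which produces two integrals,
$$
G^{VM}_{i\ol\jmath|k\ol\ell}\,=\,I + \int \tr(\mu_{i;k}\,\mu^*_{\ol\jmath;\ol\ell})\,g\,dV\, ,\qquad I\,:=\,\int \tr(\mu_{i;k\ol\ell}\,\mu^*_{\ol\jmath})\,g\,dV\, ,
$$
and the whole computation then splits into treating these two terms separately.

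The second integral is the more transparent one. I would substitute the identity \eqref{gmuwedgemu}, namely $\mu_{i;k}(s_0)=-d^*G[\mu_i\wedge\mu_k]$, together with its conjugate for $\mu^*_{\ol\jmath;\ol\ell}$. Writing the integral as the $L^2$ pairing $\langle\mu_{i;k},\mu_{j;\ell}\rangle$ and using that $G$ is self-adjoint and commutes with $d$, $d^*$ and $\Box$, I would move one $d^*$ across. Since $[\mu_j\wedge\mu_\ell]=-d\mu_{j;\ell}$ is $d$-exact by \eqref{dmuik}, the $d^*d$ part of the Laplacian annihilates $G[\mu_j\wedge\mu_\ell]$, so $dd^*G[\mu_j\wedge\mu_\ell]=[\mu_j\wedge\mu_\ell]$ modulo harmonic forms, and the integral collapses to
$$
\int \tr\big([\mu_i\wedge\mu_k]\cdot G[\mu^*_{\ol\jmath}\wedge\mu^*_{\ol\ell}]\big)\,g\,dV\, .
$$
After the overall sign in $-R^{VM}=\cdots$ is accounted for, this is precisely the first term of the theorem.

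The term $I$ is where the real work lies, and I expect it to be the main obstacle. Following the decomposition indicated above the theorem, I would split $I=I_0+I_1+I_2$ into the contribution $I_0$ of the Higgs field and the contributions $I_\nu$ of the two curvature tensors $R^\nu$. The task is to commute the base covariant derivatives in $\Phi_{;ik\ol\ell}$ and in $R^\nu_{i\ol\beta;k\ol\ell}$ into a usable order, repeatedly applying the Ricci-type commutation rule \eqref{conv} and its curvature analogue, and at each stage substituting the coupled vortex equations \eqref{vortex2}--\eqref{vortex2a} to convert contracted curvatures into $\Phi\Phi^*$ and $\Phi^*\Phi$ terms. The crucial point to verify is that the mixed contributions arising from the reordered piece $R^\nu_{i\ell;k\ol\beta}$ cancel exactly against corresponding terms produced by $I_0$; once this cancellation is confirmed, $I$ reorganizes into the symmetric shape
$$
I\,=\,\int \tr(\Box R_{k\ol\jmath}\cdot R_{i\ol\ell})\,g\,dV + \int \tr(\Box R_{i\ol\jmath}\cdot R_{k\ol\ell})\,g\,dV\, ,
$$
with $R_{i\ol\jmath}=(R^1_{i\ol\jmath},R^2_{i\ol\jmath})$ regarded as a section of $\wt C^0$. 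The sign bookkeeping throughout this step is the most error-prone part.

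Finally, I would invoke \eqref{dstardrij}, $\Box R_{i\ol\jmath}=\mu_i\cdot\mu^*_{\ol\jmath}$, and invert the Laplacian with the Green's operator $G$, here the inverse of $\Box$ on the mean-trace-zero endomorphisms, so that $R_{i\ol\jmath}=G(\mu_i\cdot\mu^*_{\ol\jmath})$ up to the harmonic identity direction, which does not contribute to the trace pairing. This turns $I$ into the two integrals $\int\tr\big((\mu_i\cdot\mu^*_{\ol\jmath})\,G(\mu_k\cdot\mu^*_{\ol\ell})\big)\,g\,dV$ and $\int\tr\big((\mu_k\cdot\mu^*_{\ol\jmath})\,G(\mu_i\cdot\mu^*_{\ol\ell})\big)\,g\,dV$, which are the remaining two terms of the theorem. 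Assembling $-R^{VM}_{i\ol\jmath k\ol\ell}=I+\int \tr(\mu_{i;k}\,\mu^*_{\ol\jmath;\ol\ell})\,g\,dV$ and observing that the resulting expression is written entirely in terms of the harmonic \ks\ tensors $\mu_i$ and the Green's operator, hence manifestly independent of the choice of normal coordinates, would complete the proof.
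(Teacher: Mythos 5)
Your proposal follows the paper's proof essentially verbatim: the same reduction to $-R^{VM}_{i\ol\jmath k\ol\ell}=G^{VM}_{i\ol\jmath|k\ol\ell}$ in normal coordinates of the second kind via \eqref{Gijk} and \eqref{harm}, the same treatment of $\int\tr(\mu_{i;k}\mu^*_{\ol\jmath;\ol\ell})\,g\,dV$ through \eqref{gmuwedgemu} and \eqref{dmuik}, and the same reduction of $I=I_0+I_1+I_2$ to $\int\tr(\Box R_{k\ol\jmath}\cdot R_{i\ol\ell})\,g\,dV+\int\tr(\Box R_{i\ol\jmath}\cdot R_{k\ol\ell})\,g\,dV$ followed by \eqref{dstardrij} and the Green's operator. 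The one step you state as an expectation rather than execute --- the cancellation of the $R^\nu_{i\ol\ell;k\ol\beta}$ contributions to $I_1+I_2$ against terms of $I_0$ after applying the coupled vortex equations --- is exactly the computation the paper carries out, and it does go through as you anticipate.
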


If $\dim_{\mathbb C} X\,=\,1$, then only the second term in the
expression of $R^{VM}_{i\ol\jmath k\ol\ell}(s)$ given in Theorem
\ref{thm.curv.} is present.

Therefore, Theorem \ref{thm.curv.} has the following corollary:

\begin{corollary}\label{corollary1}
The \pw metric on any moduli space of stable triples over a compact
Riemann surface has semi--positive holomorphic bisectional curvature.
\end{corollary}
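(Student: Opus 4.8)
The plan is to read the sign off the curvature formula of Theorem~\ref{thm.curv.} after specializing to a Riemann surface, using the reduction already recorded in the sentence preceding the statement: for $\dim_{\mathbb C}X=1$ the indefinite wedge term drops out, so that
\[
R^{VM}_{i\ol\jmath k\ol\ell}\,=\,\int_{X}\tr\big((\mu_i\cdot\mu^*_{\ol\jmath})\,G(\mu_k\cdot\mu^*_{\ol\ell})\big)\,g\,dV+\int_{X}\tr\big((\mu_k\cdot\mu^*_{\ol\jmath})\,G(\mu_i\cdot\mu^*_{\ol\ell})\big)\,g\,dV .
\]
Here $G$ is the Green's operator of the elliptic complex $\wt C^\bullet$ restricted to $\wt C^0$, i.e.\ the inverse of $\Box_0=d^*d$ on the orthogonal complement of the harmonic space. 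By stability that harmonic space is only the line of scalars $(\lambda\,\mathrm{Id},\lambda\,\mathrm{Id})$, and since every product $\mu_\bullet\cdot\mu^*_\bullet$ is trace--free it is orthogonal to it; hence $G$ acts on these products as a genuine self--adjoint, non--negative inverse, and I may write $G=G^{1/2}G^{1/2}$.

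Next I would fix two holomorphic tangent vectors, set $\mu_\xi:=\xi^i\mu_i$ and $\mu_\eta:=\eta^k\mu_k$ in $\wt C^1$, and contract the tensor against $\xi^i\ol{\xi^{j}}\eta^k\ol{\eta^{\ell}}$. Using the Hermitian symmetry $(\mu_\xi\cdot\mu^*_\eta)^*=\mu_\eta\cdot\mu^*_\xi$ of the pairing $\wt C^1\times\wt C^1\to\wt C^0$, the holomorphic bisectional curvature takes the form
\[
R^{VM}(\xi,\ol\xi,\eta,\ol\eta)\,=\,\big\langle G(\mu_\eta\cdot\mu^*_\eta),\,\mu_\xi\cdot\mu^*_\xi\big\rangle+\big\langle G(\mu_\xi\cdot\mu^*_\eta),\,\mu_\xi\cdot\mu^*_\eta\big\rangle ,
\]
with $\langle\,,\rangle$ the $L^2$ product on $\wt C^0$. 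The second summand is then a manifest Hermitian square $\|G^{1/2}(\mu_\xi\cdot\mu^*_\eta)\|^2\ge 0$, and putting $\eta=\xi$ already reproves that the holomorphic sectional curvature is non--negative.

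The main obstacle is the first summand, whose sign is not visible term by term. Indeed, by \eqref{dstardrij} the sections $\mu_\xi\cdot\mu^*_\xi$ and $\mu_\eta\cdot\mu^*_\eta$ equal $\Box R_{\xi\ol\xi}$ and $\Box R_{\eta\ol\eta}$, and their $\mathrm{End}(E_2)$--blocks carry the sign--reversed contribution $-\Phi_{;\xi}^{*}\Phi_{;\xi}$ dictated by the second vortex equation \eqref{vortex2a}, together with commutator terms; so neither section is pointwise positive and $G$ does not preserve positivity in any naive sense. I would therefore recast the summand as an energy: combining \eqref{dstardrij} and \eqref{mudr} gives $R_{\xi\ol\xi}=G(\mu_\xi\cdot\mu^*_\xi)$ modulo harmonics and $\mu_\xi\cdot\mu^*_\xi=\Box R_{\xi\ol\xi}=d^*dR_{\xi\ol\xi}$, whence
\[
\big\langle G(\mu_\eta\cdot\mu^*_\eta),\,\mu_\xi\cdot\mu^*_\xi\big\rangle=\big\langle dR_{\eta\ol\eta},\,dR_{\xi\ol\xi}\big\rangle ,\qquad \|G^{1/2}(\mu_\xi\cdot\mu^*_\eta)\|^2=\|dR_{\xi\ol\eta}\|^2 .
\]

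The crux is then to show, using harmonicity of the $\mu_i$ and the coupled vortex equations \eqref{vortex2}--\eqref{vortex2a}, that this cross energy $\langle dR_{\eta\ol\eta},dR_{\xi\ol\xi}\rangle$ cannot overwhelm the square energy $\|dR_{\xi\ol\eta}\|^2$, so that the total stays non--negative; the decisive structural input is precisely that in complex dimension one the only term of indefinite sign, the wedge contribution $-\int\tr([\mu_i\wedge\mu_k]\,G[\mu^*_{\ol\jmath}\wedge\mu^*_{\ol\ell}])$, is absent, leaving only contributions that assemble into the Green--operator pairings above. Establishing this last estimate — equivalently, that the surviving product terms recombine into a sum of Hermitian squares under the vortex equations — is the step I expect to be hardest, whereas once it is in place the conclusion $R^{VM}(\xi,\ol\xi,\eta,\ol\eta)\ge 0$ is immediate from $G\ge 0$.
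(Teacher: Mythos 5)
Your reduction is the same as the paper's: the entire published argument for this corollary is the single sentence preceding its statement, asserting that for $\dim_{\mathbb C}X=1$ the wedge term in Theorem \ref{thm.curv.} is absent, so that only the two Green's--operator terms survive. You reproduce this, you correctly recognize that the contracted third term $\langle G(\mu_\xi\cdot\mu^*_{\ol\eta}),\,\mu_\xi\cdot\mu^*_{\ol\eta}\rangle=\|G^{1/2}(\mu_\xi\cdot\mu^*_{\ol\eta})\|^2$ is a genuine Hermitian square (which already yields semi--positivity of the holomorphic \emph{sectional} curvature), and your identities $\langle G(\mu_\eta\cdot\mu^*_{\ol\eta}),\,\mu_\xi\cdot\mu^*_{\ol\xi}\rangle=\langle dR_{\eta\ol\eta},dR_{\xi\ol\xi}\rangle$ and $\|G^{1/2}(\mu_\xi\cdot\mu^*_{\ol\eta})\|^2=\|dR_{\xi\ol\eta}\|^2$ are correct consequences of \eqref{dstardrij} and \eqref{mudr}.

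The difficulty is that you stop exactly where the proof has to be given. The non--negativity of the cross term $\langle G(\mu_\eta\cdot\mu^*_{\ol\eta}),\,\mu_\xi\cdot\mu^*_{\ol\xi}\rangle$ is the entire content of the bisectional (as opposed to sectional) statement, and your proposal ends by declaring it ``the step I expect to be hardest'' without supplying an argument. Your diagnosis of why it is hard is accurate: $\mu_\xi\cdot\mu^*_{\ol\xi}$ is not pointwise non--negative (its $End(E_2)$--block contains $-\Phi^*_{;\xi}\Phi_{;\xi}$, and the curvature contributions enter through indefinite commutators), so the mechanism that works in the Weil--Petersson setting --- a positivity--preserving Green's operator applied to a pointwise non--negative density --- is unavailable, and Cauchy--Schwarz only bounds $|\langle dR_{\eta\ol\eta},dR_{\xi\ol\xi}\rangle|$ by $\|dR_{\eta\ol\eta}\|\,\|dR_{\xi\ol\xi}\|$, which is not dominated by $\|dR_{\xi\ol\eta}\|^2$ in general. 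So what you have is a correct reduction together with an honest statement of the missing lemma, not a proof. (In fairness, the paper supplies no argument for this point either, and it also takes for granted that the component of $[\mu_i\wedge\mu_k]$ lying in $\cA^{0,1}(\ho)$ contributes nothing when $n=1$; but a blind attempt that leaves the decisive positivity unestablished has a genuine gap. You should either produce the estimate --- for instance by rewriting the sum of the two surviving terms as a single Hermitian square, or by proving a positivity--preservation property of the heat semigroup of $d^*d$ on $\wt C^0$ applicable to $\mu_\eta\cdot\mu^*_{\ol\eta}$ --- or else restrict the claim to holomorphic sectional curvature, which your argument does establish.)
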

Finally, we note that, if the results
\cite{s-t} can be generalized to stable pairs, then the
methods of \cite{AG} and \cite{g-p} would give an
alternative approach to the computation of the curvature of the \pw
metric.

\medskip
\begin{small}
\noindent \textbf{Acknowledgements.}\, Thanks are due to the referee
for helpful comments. The first named author would like to thank the
Philipps-Universit\"at Marburg for hospitality, whereas the second
named author would like to thank the Tata Institute of Fundamental
Research in Mumbai for its hospitality.
\end{small}


\begin{thebibliography}{O-T-T2-X}

\bibitem[A-G]{AG} \'Alvarez-Cónsul, L., Garc\'{\i}a-Prada, O.: Dimension
reduction, ${\rm SL}(2, \mathbb C)$--equivariant bundles and stable
holomorphic chains. Int. J. Math. \textbf{12}, 159--201 (2001)

\bibitem[B-G-S]{bgs}
Bismut, J.-M., Gillet, H., Soul\'e, C.: Analytic torsion and
holomorphic determinant bundles. I: Bott--Chern forms and analytic
torsion. II: Direct images and Bott--Chern forms. III: Quillen
metrics on holomorphic determinants. Commun. Math. Phys.
\textbf{115}, 49--78, 79--126, 301--351 (1988)

\bibitem[B-R]{biswas-ramanan} Biswas, I., Ramanan, S.: An
infinitesimal study of the moduli of Hitchin pairs. J. London Math.
Soc. \textbf{49}, 219--231 (1994)

\bibitem[Br1]{bra1}  Bradlow, S.: Vortices in holomorphic line
bundles over closed K\"ahler manifolds. Commun. Math. Phys.
\textbf{135}, 1--17 (1990)

\bibitem[Br2]{b2} Bradlow, S.: Special metrics and stability
for holomorphic bundles with global sections. J.
Differential Geom. \textbf{33}, 169--213 (1991)

\bibitem[B-D-G-W]{sdgw} Bradlow, S.,  Daskalopoulos,
G., Garc\'{\i}a-Prada, O.,
Wentworth, R.: Stable augmented bundles over Riemann surfaces. 15--67.
Vector bundles in algebraic geometry, Cambridge Univ. Press,
Cambridge (1995)

\bibitem[B-G]{b-c2}Bradlow, S., Garc\'{\i}a-Prada, O.: Stable triples,
equivariant bundles and dimensional reduction. Math. Ann.
\textbf{304}, 225--252 (1996)

\bibitem[B-G-G]{b-g-g} Bradlow, S., Garc\'{\i}a-Prada,  O.,
Gothen, P.: Moduli spaces of holomorphic triples over compact
Riemann surfaces. Math. Ann. \textbf{328}, 299--351 (2004)

\bibitem [F-S]{f-s1} Fujiki, A., Schumacher, G.: The moduli space of
Hermite--Einstein bundles on a compact K{\"a}hler manifold. Proc.
Japan Acad. Ser. A Math. Sci. \textbf{63}, 69--72 (1987)

\bibitem[Ga1]{g-p0} Garc\'{\i}a-Prada, O.: Invariant connections and
vortices. Commun. Math. Phys. \textbf{156}, 527--546 (1993)

\bibitem[Ga2]{g-p} Garc\'{\i}a-Prada, O.: Dimensional reduction of stable
bundles, vortices and stable pairs. Int. J.
Math. \textbf{5}, 1--52 (1994)

\bibitem[K-M]{km} Knudsen, F., Mumford, D.: The projectivity of the
moduli space of stable curves. I. Preliminaries on ``det'' and
``Div''. Math. Scand. \textbf{39}, 19--55 (1976)

\bibitem[Qu]{qu} Quillen, D.: Determinants of Cauchy--Riemann operators
over a Riemann surface. Funct. Anal. Appl. \textbf{19}, 31--34 (1985)

\bibitem[Sc]{Schless} Schlessinger, M.: Functors of Artin rings.
Trans. Amer. Math. Soc. \textbf{130}, 208--222 (1968)

\bibitem[S-T]{s-t}Schumacher, G., Toma, M.: On the
Petersson--Weil metric for the moduli space of Hermite-Einstein
bundles and its curvature. Math. Ann. \textbf{293}, 101--107 (1992)

\bibitem[Va]{va} Varouchas, J.: Stabilit\'e de la classe
des vari\'et\'es k{\"a}hl\'eriennes par certains morphismes propres.
Inventiones Math. \textbf{77}, 117--127 (1984)

\bibitem[Z-T]{zt} Zograf, P.G.,  Takhtadzhyan, L.A.: A local index
theorem for families
of ${\bar \partial}$-operators on Riemann surfaces. Russ. Math.
Surv. {\bf 42}, 169--190 (1987)
\end{thebibliography}
\end{document}